\newtheorem{theorem}{Theorem}[section]
\newtheorem{lemma}[theorem]{Lemma}
\newtheorem{corollary}[theorem]{Corollary}
\theoremstyle{definition}
\numberwithin{equation}{section}
\newcommand{\bc}{\begin{center}}
\newcommand{\ec}{\end{center}}
\newcommand{\be}{\begin{eqnarray}}
\newcommand{\ee}{\end{eqnarray}}
\newcommand{\nn}{\nonumber}
\newcommand{\ben}{\begin{eqnarray*}}
\newcommand{\een}{\end{eqnarray*}}
\newcommand{\Om}{\Omega}
\newcommand{\na}{\nabla}
\def\na{\nabla}
\def\cE{\mathcal{E}}
\def\cT{\mathcal{T}}
\def\R{\mathbb{R}}
\def\div{\operatorname{div}}
\newcommand{\reffig}[1]{Figure \ref{#1}}
\newcommand{\Rmnum}[1]{\expandafter\@slowromancap\romannumeral #1@}
\title[]
{\small Superconvergence of both the Crouzeix--Raviart and Morley elements}
\author[J.~Hu]
{Jun Hu$^\ast$}
\address{$^\ast$ LMAM and School of Mathematical Sciences,
 Peking University, Beijing 100871, P. R. China}
\email{hujun@math.pku.edu.cn}
\author[R. Ma]{Rui Ma$^\dagger$}
\address{$^\dagger$ LMAM and School of Mathematical Sciences,
 Peking University, Beijing 100871, P. R. China}
\email{maruipku@gmail.com}
\thanks{The  first author was supported by  the NSFC Project 11271035 and  by  the NSFC Key Project 11031006.}
\keywords{Superconvergence, Crouzeix-Raviart element, Morley element
\\ AMS Subject Classification: 65N30,  65N15, 35J25}
\begin{document}
\newpage

\begin{abstract}
In this paper, a new method is proposed to prove the superconvergence of both the Crouzeix--Raviart and Morley elements. The main idea is to fully employ equivalences with the first order Raviart--Thomas element and the first order Hellan--Herrmann--Johnson element, respectively. In this way, some special conformity of discrete stresses is explored and superconvergence of mixed elements can be used to analyze superconvergence of nonconforming elements. Finally, a half order superconvergence by postprocessing is proved for both  nonconforming elements.
\end{abstract}
\maketitle
\section{Introduction}
The superconvergence analysis is well studied for conforming finite elements, see \cite{ChenHuang1995,LinYan1996}, and as well as mixed finite elements of second order problems. For triangular mixed elements, Douglas et al. \cite{DouglasRobert1985} proved superconvergence for the displacement variable on general
triangulations, see also \cite{Arnold}. Brandts \cite{Brandts1994,Brandts2000} proved superconvergence for the stress variable on uniform triangulations for the first and second order Raviart--Thomas elements \cite{RaviartThomas1977}, respectively. For superconvergence along the Gauss--lines in rectangular mixed finite element methods, see \cite{DouglasWang1989}. However,
in the case of nonconforming finite elements, due to the reduced continuity of
trial and test functions, it becomes much more difficult to establish superconvergence
properties and related asymptotic error expansions. There are several superconvergence results on rectangular elements. In \cite{ChenLi1994,ShiJiang1997}, for the Wilson element \cite{BergerScott(1972)}, the superconvergence estimate of the gradient error on the centers of elements was obtained.
The essential point employed therein is that the Wilson element space can be
split into a conforming part and a nonconforming part. Thanks to the superconvergence estimate of the
consistency error, some superconvergence results of the nonconforming rotated $Q_1$ element \cite{RannacherTurek(1992)} and its variants were derived, see \cite{HuShi(2005),LinTobiska,MingShiXu2006}. As for the plate bending problem, there are only  few superconvergence results for nonconforming finite elements. In \cite{Chen2012}, Chen first established the supercloseness of the corrected interpolation of the incomplete biquadratic element \cite{Shi1986} on uniform rectangular meshes. By using similar corrected interpolations as in \cite{Chen2012}, Mao et al. \cite{MaoShi2009} first proved a half order superconvergence for the Morley element \cite{Morley(1968)} and the incomplete biquadratic nonconforming element on uniform rectangular meshes.  Based on the equivalence to the Stokes equations and a superconvergence result of Ye \cite{Ye2002} on the Crouzeix--Raivart element \cite{CrouzeixRaviart(1973)}, Huang et al. \cite{HuangHuang2013} derived the superconvergence for the Morley element, which was postprocessed by projecting the finite element solution to another finite element space on a coarser mesh \cite{Wang2000}.

In this paper, a new method is proposed to derive the superconvergence for nonconforming finite elements. The main idea is to explore some conformity of discrete stresses produced by nonconforming methods. Note that such conformity can not be obtained within original formulations for nonconforming elements. Fortunately, for the Crouzeix--Raviart element of the Poisson problem and the Morley element of the plate bending problem, it can be deduced by using the equivalences with the first order Raviart--Thomas element \cite{Marini1985} and the first order Hellan--Herrmann-Johnson element \cite{Arnold}, respectively. More precisely, based on these equivalences, we can translate the problem of superconvergence of nonconforming elements to the problem of superconvergence of mixed elements. Note that mixed elements are conforming methods within mixed formulations. This enables us to use superconvergence of mixed elements to establish superconvergence of nonconforming elements. In this way, it is able to overcome the main difficulty caused by nonconformity for the superconvergence analysis of nonconforming finite elements. In particular, a half order superconvergence by postprocessing is proved for both aforementioned two nonconforming elements on uniform triangulations. As a byproduct, the superconvergence is establised for the Hellan--Herrmann-Johnson element which is somehow missing in literature. Numerical tests are provided to demonstrate theoretical results.

The remaining paper is organized as follows. Section 2 proposes  the Poisson problem and the corresponding nonconforming and mixed finite elements. Section 3 presents the superconvergence result for the Raviart--Thomas element and proves the superconvergence result for the Crouzeix--Raviart element. Section 4 proposes the plate bending problem and the corresponding nonconforming and mixed finite elements. Section 5 proves the superconvergence result for the Hellan--Herrmann--Johnson element and the Morley element. Section 6 presents some numerical tests.
\section{The Poisson problem and its Crouzeix--Raviart element}
\label{sec:Poisson}
Throughout this paper, let $\Omega\subset\R^2$ be a polygonal domain. We recall some notations for Sobolev spaces (see \cite{Ciarlet1978}).
 For a subdomain $G$ of $\Omega$, let $P_m(G)$ be the space of polynomials of degree less than or equal to $m$ over $G$. $H^{s}(G)$ denotes the classical Sobolev space with norm $\|\cdot\|_{s,G}$ and the seminorm $|\cdot|_{s,G}$.  $W^{k,\infty}(G)$ denotes the classical Sobolev space with norm $\|\cdot\|_{k,\infty,G}$ and the seminorm $|\cdot|_{k,\infty,G}$.

 Given $f\in L^2(\Om)$,  the Poisson model  problem
finds $u\in H^1_0(\Om)$ such that
\begin{equation}\label{Poisson}
(\na u, \na v)=(f,v)\quad \text{ for all }v\in H^1_0(\Omega).
\end{equation}
By introducing an auxiliary variable $\sigma:=\nabla u$,  the problem can be formulated as the following equivalent
mixed problem which seeks $(\sigma, u)\in H(\div,\Omega)\times L^2(\Omega)$ such that

 \begin{equation}\label{MixedPoisson}
\begin{split}
 &(\sigma, \tau)+(u, \div \tau)=0\quad \text{ for any }\tau\in H(\div,\Omega),\\
 &(\div \sigma, v)=(-f, v)\quad \text{ for any }v\in L^2(\Omega).
\end{split}
 \end{equation}

 Suppose that $\bar{\Omega}$ is covered by uniform shape regular triangulations $\mathcal{T}$ consisting of triangles in two dimensions. $\mathcal{T}$ is said to be uniform if any two adjacent triangles of $\mathcal{T}$ form a parallelogram. $h$ denotes the diameter of the element $K\in\cT$. Let  $\cE$ denote the set of
edges of $\cT$, and  $\cE(\Om)$ denote the set of
all the interior edges, and  $\cE(\partial \Om)$ denote  the set of
all the  boundary edges. Given $e\in \cE$, let $\nu_e$ be the unit normal vector of $e$
and  $[\cdot]$ be jumps of piecewise functions over $e$, namely
$$
[v]:=v|_{K^+}-v|_{K^-}
$$
for piecewise functions $v$ and any two elements $K^+$ and $K^-$ which share the common edge $e$. Note that
$[\cdot]$  becomes  traces of functions  on $e$ for  boundary  edges
$e$.
Throughout the paper, an inequality $A\lesssim B$ replaces $A\leq CB$ with some multiplicative mesh--size independent constant $C>0$ .

 The Crouzeix--Raviart element \cite{CrouzeixRaviart(1973)} space  over $\mathcal{T}$ is defined by
 \begin{equation}
 W_{\rm CR}:=\begin{array}[t]{l}\big\{v\in L^2(\Om):
 v|_{K}\in P_1(K) \text{ for each }K\in \cT,
 \int_e[v]ds=0\text{ for all } e\in\cE(\Omega)
 \big\}\,,
 \end{array}\nn
\end{equation}
\begin{equation}
 V_{\rm CR}:=\begin{array}[t]{l}\big\{v\in W_{\rm CR}:\int_e vds=0 \text{ for all }e\in\cE(\partial\Omega)
 \big\}\,.
 \end{array}\nn
\end{equation}
The Crouzeix--Raviart element method of Problem \eqref{Poisson} finds  $u_{\rm CR}\in V_{\rm CR}$ such that
\begin{equation}\label{CRPoisson}
(\na_{\rm NC} u_{\rm CR}, \na_{\rm NC} v)=(f,v)\text{ for all }v\in V_{\rm CR}.
\end{equation}

To analyze the superconvergence of the Crouzeix--Raviart element, we  introduce the first order Raviart--Thomas element \cite{RaviartThomas1977} whose shape function
 space is
 $$
 {\rm RT}(K):=(P_0(K))^2+x P_0(K) \text{ for any }K\in\cT.
 $$
 Then the corresponding global finite element space reads
 \begin{equation}
 {\rm RT}(\cT):=\{\tau\in H(\div, \Om): \tau|_K\in {\rm RT}(K) \text{ for any }K\in\cT\}.
 \end{equation}
   To get a stable pair of space,  the piecewise constant space is proposed to
  approximate the displacement, namely,
  \begin{equation}
  {\rm U}_{\rm  RT}(\cT):=\{v\in L^2(\Om): v|_K\in P_0(K)\text{ for any }K\in\cT\}.
  \end{equation}
 The Raviart--Thomas element method of Problem \eqref{MixedPoisson} seeks $(\sigma_{\rm RT}, u_{\rm RT}) \in {\rm RT}(\cT)\times  {\rm U}_{\rm  RT}(\mathcal{T})$ such that
 \begin{equation}\label{DiscreteMixedPoisson}
\begin{split}
 &(\sigma_{\rm RT}, \tau)+(u_{\rm RT}, \div \tau)=0\quad \text{ for any }\tau\in {\rm RT}(\cT),\\
 &(\div \sigma_{\rm RT}, v)=(-f, v)\quad \text{ for any }v\in {\rm U}_{\rm  RT}(\cT).
\end{split}
 \end{equation}

Given $K\in\mathcal{T}$ and $f\in L^2(K)$, define $f_K=\frac{1}{|K|}\int_Kfdx$. Given $f\in L^2(\Omega)$, define the piecewise constant projection $\Pi_0f$ by
\begin{equation*}
  (\Pi_0f)|_K=f_K.
\end{equation*}
Because of the definition of $ {\rm U}_{\rm  RT}(\mathcal{T})$, $f$ in the second equation of \eqref{DiscreteMixedPoisson} can be replaced by $\Pi_0f$. We define the auxiliary method: Find  $\bar{u}_{\rm CR}\in V_{\rm CR}$ such that
\begin{equation}\label{MCRPoisson}
(\na_{\rm NC} \bar{u}_{\rm CR}, \na_{\rm NC} v)=(\Pi_0f,v)\text{ for all }v\in V_{\rm CR}.
\end{equation}

Note that this method differs from \eqref{CRPoisson} only by the presence of the projection in the right hand side. Marini \cite{Marini1985} proved its equivalence to the Raviart--Thomas element method \eqref{DiscreteMixedPoisson}:
\begin{equation}\label{equivalencePoisson}
\sigma_{\rm RT}|_K=\nabla \bar{u}_{\rm CR}|_K-\frac{f_K}{2}(x-{\rm Mid(K)})\quad x\in K\text{ for any $K\in\mathcal{T}$},
\end{equation}
where ${\rm Mid}(K)$ denotes the center of $K$.

Subtracting  \eqref{MCRPoisson} from \eqref{CRPoisson} with $v=u_{\rm CR}-\bar{u}_{\rm CR}$ yields that
\begin{equation*}
\begin{split}
(\nabla_{\rm NC}(u_{\rm CR}-\bar{u}_{\rm CR}),\nabla_{\rm NC}(u_{\rm CR}-\bar{u}_{\rm CR}))&=(f-\Pi_0f,u_{\rm CR}-\bar{u}_{\rm CR})\\
&=(f-\Pi_0f,u_{\rm CR}-\bar{u}_{\rm CR}-\Pi_0(u_{\rm CR}-\bar{u}_{\rm CR})).
\end{split}
\end{equation*}
Hence, the Poincar$\acute{\rm e}$ inequality from \cite{LaugesenSiudeja} yields
\begin{equation}\label{DifferencePoisson}
\|\nabla_{\rm NC}(u_{\rm CR}-\bar{u}_{\rm CR})\|_{0,\Omega}\leq \frac{h^2}{j^2_{1,1}}|f|_{1,\Omega},
\end{equation}
where $j_{1,1}= 3.8317$ denotes the first positive root of the Bessel function of the first kind.
\section{Superconvergence analysis of the Crouzeix--Raviart element}
\label{sec:poissonsuperconvergece}
In this section, we first present the superconvergence result of the Raviart--Thomas element by Brandts \cite{Brandts1994}. Then, based on this result and the equivalence \eqref{equivalencePoisson}, we derive the superconvergence result of the Crouzeix--Raviart element.
\subsection{The superconvergence result of the Raviart--Thomas element}
We introduce a result on Sobolev spaces in the following lemma, which describes the behavior of functions near the boundary. Define $\Omega_h$ as the subset of points in $\Omega$ having distance less that $h$ from the boundary:
\begin{equation*}
  \Omega_h=\{x\in\Omega:\exists y\in\partial\Omega,{\rm dist}(x,y)\leq h\}.
\end{equation*}
Then we have the following result, see \cite{Brandts1994,Lions1972}.
\begin{lemma}
\label{Boundarynorm}
For $v\in H^s(\Omega)$, where $0\leq s\leq\frac{1}{2}$, we have
\begin{equation*}
  \|v\|_{0,\Omega_h}\lesssim h^s\|v\|_{s,\Omega}.
\end{equation*}
\end{lemma}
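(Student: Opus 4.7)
The plan is to establish the estimate at the two endpoints $s=0$ and $s=1/2$ and then recover the intermediate cases by real interpolation, exploiting that $H^s(\Omega)=[L^2(\Omega),H^{1/2}(\Omega)]_{2s,2}$ for $0<s<1/2$ and that the restriction operator
\[
R_h\colon v\longmapsto v|_{\Omega_h}
\]
is linear and independent of $s$.

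For $s=0$ the inequality is immediate, because $\Omega_h\subset\Omega$ forces $\|v\|_{0,\Omega_h}\le\|v\|_{0,\Omega}$. For $s=1/2$, I would use that $\Omega_h$ is (up to neighborhoods of the finitely many corners, whose Lebesgue measure is $\mathcal{O}(h^2)$ and can be absorbed) a tubular neighborhood of $\partial\Omega$. Parameterize such a slab via $x=y+t\,\nu(y)$ with $y\in\partial\Omega$ and $t\in[0,h]$, so that
\[
\|v\|_{0,\Omega_h}^{2}\lesssim\int_{0}^{h}\!\!\int_{\Gamma_t}|v(y+t\nu(y))|^{2}\,ds_y\,dt,
\]
where $\Gamma_t$ denotes the level set at distance $t$. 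The inner integral is the square of the $L^2$-norm of the trace of $v$ on $\Gamma_t$, which by the standard trace theorem on the Lipschitz domain $\Omega\setminus\Omega_t$ is controlled by $\|v\|_{1/2,\Omega}^{2}$ uniformly in $t\in[0,h]$. Integrating in $t$ yields the factor $h$, and hence $\|v\|_{0,\Omega_h}\lesssim h^{1/2}\|v\|_{1/2,\Omega}$.

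With these two estimates, real interpolation of the bounded operator $R_h$ between the pairs $(L^2(\Omega)\to L^2(\Omega_h))$ of norm $\le 1$ and $(H^{1/2}(\Omega)\to L^2(\Omega_h))$ of norm $\lesssim h^{1/2}$ gives
\[
\|R_h\|_{H^{s}(\Omega)\to L^{2}(\Omega_h)}\;\lesssim\;1^{1-2s}\cdot(h^{1/2})^{2s}=h^{s}
\]
for every $s\in(0,1/2)$, which is the claimed bound.

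The main obstacle is the $s=1/2$ endpoint on a polygonal domain: the distance function is only Lipschitz and the normal field is discontinuous at corners, so the tubular parameterization is not globally smooth. I would handle this by splitting $\Omega_h$ into finitely many pieces along each straight edge (where the parameterization $y+t\nu$ is smooth), plus small corner sectors of diameter $\mathcal{O}(h)$ whose contribution is $\|v\|_{0,\text{sector}}^2\lesssim\|v\|_{\infty\text{-weak}}\cdot h^2$ and can be dominated by $h\|v\|_{1/2,\Omega}^{2}$ via the Sobolev embedding / Hardy inequality. Alternatively, one can cite Lions–Magenes or the reference already invoked in the paper \cite{Brandts1994,Lions1972} where exactly this boundary-layer estimate is proven in the required generality, which is the route I would ultimately take to keep the exposition short.
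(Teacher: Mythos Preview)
The paper does not prove this lemma at all; it simply cites \cite{Brandts1994,Lions1972} and moves on. Your proposal therefore goes further than the paper itself, and your interpolation strategy (trivial at $s=0$, trace-based at $s=1/2$, real interpolation in between) is the standard and correct route to this boundary-layer estimate.

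One minor correction: the separate corner treatment you sketch is both unnecessary and, as written, flawed. It is unnecessary because for a polygon the edge strips $S_i=\{x\in\Omega:\operatorname{dist}(x,e_i)<h\}$ already cover $\Omega_h$ (each edge $e_i$ contains its endpoints, so $\operatorname{dist}(x,e_i)\le\operatorname{dist}(x,\partial\Omega)$ whenever the nearest boundary point lies on $e_i$ or is a vertex of $e_i$); hence the straight-strip argument with the uniform trace bound suffices with only finite overlap and no leftover sector. It is flawed as written because $H^{1/2}(\Omega)$ does \emph{not} embed into $L^\infty$ in two dimensions, so the appeal to a weak-$L^\infty$ bound would not close. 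Dropping the corner discussion entirely (or simply invoking the cited references, as you yourself suggest at the end) gives a clean argument.
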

Given $q\in (H^1(\Omega))^2$, define the interpolation operator $\Pi_{\rm RT}q\in{\rm RT}(\mathcal{T})$ by
\begin{equation*}
  \int_e(\Pi_{\rm RT}q-q)^T\nu_eds=0\quad\text{for all }e\in\mathcal{E}.
\end{equation*}
 Brandts gave the following superconvergence result of the Raviart--Thomas element, see \cite[Theorem 3.2]{Brandts1994} .
\begin{theorem}
\label{superRT}
Let $\sigma\in(H^2(\Omega))^2$ and $\sigma_{RT}$ be the solutions of \eqref{MixedPoisson} and \eqref{DiscreteMixedPoisson}, respectively.  There holds that
\begin{equation*}
 \|\sigma_{\rm RT}-\Pi_{\rm RT}\sigma\|_{0,\Omega}\lesssim h^{\frac{3}{2}}(\|\sigma\|_{\frac{3}{2},\Omega}+h^{\frac{1}{2}}|\sigma|_{2,\Omega}).
\end{equation*}
\end{theorem}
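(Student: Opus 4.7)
The plan is to follow the standard supercloseness strategy for mixed methods, adapted to uniform triangulations. Subtracting the first equation of \eqref{DiscreteMixedPoisson} from the first equation of \eqref{MixedPoisson} tested against $\tau\in{\rm RT}(\mathcal T)\subset H(\div,\Omega)$, and invoking the commuting diagram identity $\div\Pi_{\rm RT}\sigma=\Pi_0\div\sigma$ together with the second equations of both systems, one first shows $\div(\sigma_{\rm RT}-\Pi_{\rm RT}\sigma)=0$ and
\begin{equation*}
(\sigma_{\rm RT}-\Pi_{\rm RT}\sigma,\tau)=(\sigma-\Pi_{\rm RT}\sigma,\tau)\quad\text{for all }\tau\in{\rm RT}(\mathcal T).
\end{equation*}
Choosing $\tau=\sigma_{\rm RT}-\Pi_{\rm RT}\sigma$ reduces the theorem to controlling the right-hand side by the two advertised terms times $\|\tau\|_{0,\Omega}$.

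The central ingredient is an elementwise analysis of $(\sigma-\Pi_{\rm RT}\sigma,\tau)$ that exploits the parallelogram structure of $\mathcal T$. On each interior pair of triangles $K^+,K^-$ forming a parallelogram I would subtract an average $P_1$ expansion of $\sigma$ that is preserved by $\Pi_{\rm RT}$, so that only the $H^2$-remainder enters. A quadrature/symmetry argument on the parallelogram, using that $\tau|_{K^\pm}\in{\rm RT}(K^\pm)$ is affine and that the two triangles are reflections of one another through the midpoint, then produces a cancellation of one extra power of $h$; a Bramble--Hilbert estimate bounds the resulting contribution by $h^2|\sigma|_{2,K^+\cup K^-}\|\tau\|_{0,K^+\cup K^-}$. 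Summing over all interior parallelogram pairs yields the $h^2|\sigma|_{2,\Omega}\|\tau\|_{0,\Omega}$ contribution that matches the second term in the stated bound.

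Near $\partial\Omega$ the parallelogram pairing breaks down, so the boundary strip $\Omega_h$ must be treated without cancellation. There the standard first order interpolation estimate yields $\|\sigma-\Pi_{\rm RT}\sigma\|_{0,\Omega_h}\lesssim h\|\sigma\|_{1,\Omega_h}$, and applying Lemma \ref{Boundarynorm} to $\na\sigma\in (H^{1/2}(\Omega))^{2\times 2}$ with $s=\tfrac12$ gives $\|\sigma\|_{1,\Omega_h}\lesssim h^{1/2}\|\sigma\|_{3/2,\Omega}$. Hence the boundary-strip term is bounded by $h^{3/2}\|\sigma\|_{3/2,\Omega}\|\tau\|_{0,\Omega}$, reproducing the first term in the stated bound. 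Adding the two contributions, substituting $\tau=\sigma_{\rm RT}-\Pi_{\rm RT}\sigma$, and dividing by $\|\tau\|_{0,\Omega}$ completes the argument.

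The main obstacle is the interior cancellation step: one must carefully choose the polynomial on each parallelogram pair so that the leading order antisymmetry between $K^+$ and $K^-$ is exposed against the affine part of $\tau$, and then quantify the remainder by $|\sigma|_2$ only on a patch disjoint from $\partial\Omega$. Distributing the regularity in this way, so that the interior term uses the full $H^2$-seminorm on a set where cancellation occurs, while the boundary term uses only the fractional norm $\|\sigma\|_{3/2}$ via Lemma \ref{Boundarynorm}, is precisely what produces the two-term structure of the estimate rather than a cruder $h|\sigma|_{1}$ or a uniform $h^2|\sigma|_2$ bound that would require higher global regularity than is available.
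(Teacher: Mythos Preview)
The paper does not prove this theorem; it is quoted from Brandts \cite{Brandts1994}. The paper does, however, reproduce the same argument for the Hellan--Herrmann--Johnson analogue in Theorem~\ref{superHHJ}, so a comparison is possible. Your overall strategy---an orthogonality relation, interior parallelogram cancellation, and a boundary-strip estimate via Lemma~\ref{Boundarynorm}---is the right one and matches Brandts. Two points need correction.

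First, the identity $(\sigma_{\rm RT}-\Pi_{\rm RT}\sigma,\tau)=(\sigma-\Pi_{\rm RT}\sigma,\tau)$ does \emph{not} hold for all $\tau\in{\rm RT}(\cT)$: subtracting the first equations of \eqref{MixedPoisson} and \eqref{DiscreteMixedPoisson} leaves the term $(u-u_{\rm RT},\div\tau)$. It vanishes only when $\div\tau=0$, which is the case for the particular choice $\tau=\sigma_{\rm RT}-\Pi_{\rm RT}\sigma$, so the slip is harmless but the quantifier is wrong.

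Second, and more substantively, your interior cancellation step as written does not work. A single pairing of triangles into parallelograms is not enough: even though $\tau=\sigma_{\rm RT}-\Pi_{\rm RT}\sigma$ is piecewise constant, it is discontinuous across interior edges (only the normal component is continuous), so $\tau$ cannot be pulled outside $\int_{N}(\sigma-\Pi_{\rm RT}\sigma)\,dx$, and the jump part of $\tau$ against the odd part of $\sigma-\Pi_{\rm RT}\sigma$ survives at order $h$. The actual mechanism in Brandts (and in the paper's HHJ proof) is a decomposition of $\tau$ along the three fixed edge-normal directions $f_1,f_2,f_3$, writing $\tau=\sum_{i=1}^3 c_i\,\tau_{f_i}$ with $\tau_{f_i}$ a fixed basis and $c_i$ the normal flux through the edge with normal $f_i$. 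For each $i$ separately, the coefficient $c_i$ \emph{is} continuous and constant on parallelograms $N_{f_i}$ whose diagonal has normal $f_i$, and one uses a different parallelogram partition of $\Omega$ for each $i$. Only then can one factor out the coefficient and apply the parallelogram identity (the RT analogue of Lemma~\ref{lemma:HHJ}) plus Bramble--Hilbert to gain the extra power of $h$. Your ``quadrature/symmetry argument'' must be replaced by this directional decomposition.
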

Furthermore, a post-processing mechanism was proposed in \cite{Brandts1994}, which when applied to the projection $\Pi_{\rm RT}q$ of a function $q\in (H^2(\Omega))^2$, will improve its approximation property. Given $q\in{\rm RT}(\cT)$, define function $K_hq\in (W_{\rm CR})^2$ as follows (see also \reffig{postprocessing}).
\begin{figure}[!ht]
  \centering
  % Requires \usepackage{graphicx}
  \includegraphics[width=10cm]{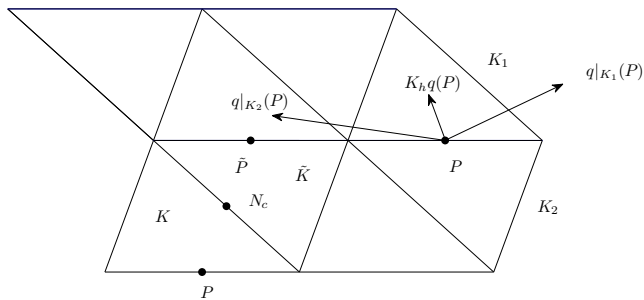}\\
  \caption{Post-processing a function $q\in{\rm RT}(\mathcal{T})$}\label{postprocessing}
\end{figure}
\begin{itemize}
  \item Given $e\in\cE(\Omega)$, suppose that $e=K_1\cap K_2$ and $P$ denotes the center of $e$. Let
\begin{equation*}
  K_hq(P)=\frac{1}{2}(q|_{K_1}(P)+q|_{K_2}(P)).
\end{equation*}
  \item Given $e\in\cE(\partial\Omega)$ and $e\subset\partial K$, there exists at least one $\tilde{K}\in\cT$ such that $N=K\cup\tilde{K}$ is a parallelogram. The straight line through the center $P$ of $e$ and the center $N_c$ of the parallelogram intersects the boundary of $N$ in another point $\tilde{P}$. Define
\begin{equation*}
  K_hq(P)=2K_hq(N_c)-K_hq(\tilde{P}).
\end{equation*}
\end{itemize}
Brandts \cite{Brandts1994} proved that the vector $K_h\Pi_{\rm RT}q$ is a higher order approximation of $q$ than $\Pi_{\rm RT}q$ itself.
\begin{theorem}
\label{postprocessinghighorder}
Suppose $q\in (H^2(\Omega))^2$, then there holds that
\begin{equation*}
  \|q-K_h\Pi_{\rm RT}q\|_{0,\Omega}\lesssim h^2|q|_{2,\Omega}.
\end{equation*}
\end{theorem}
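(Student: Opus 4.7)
The plan is to compare $K_h\Pi_{\rm RT}q$ with the natural Crouzeix--Raviart vector interpolant $I_{\rm CR}^v q\in (W_{\rm CR})^2$ defined by $(I_{\rm CR}^v q)(P_e)=\frac{1}{|e|}\int_e q\,ds$ at every edge midpoint $P_e$. Since standard nonconforming approximation already gives $\|q-I_{\rm CR}^v q\|_{0,\Omega}\lesssim h^2|q|_{2,\Omega}$, the task reduces to bounding $\|I_{\rm CR}^v q-K_h\Pi_{\rm RT}q\|_{0,\Omega}$. Because both operators take values in $(W_{\rm CR})^2$ and a CR function on a triangle $K$ satisfies $\|\phi\|_{0,K}\lesssim h\bigl(\sum_{e\subset\partial K}|\phi(P_e)|^2\bigr)^{1/2}$, it is enough to control the nodal differences at edge midpoints.

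The central claim I would then establish is a \emph{linear reproduction property}: for every linear vector field $q$, $K_h\Pi_{\rm RT}q(P_e)=q(P_e)$ at every edge midpoint $P_e$. For the normal component at an interior edge $e=K_1\cap K_2$, the RT normal trace is the constant $\frac{1}{|e|}\int_e q\cdot\nu_e\,ds$ on either side, so averaging is trivial and the midpoint rule is exact on linears. The tangential component is the delicate piece, and this is where the uniform-mesh hypothesis enters crucially: $K_1$ and $K_2$ form a parallelogram related by a point reflection through its center, and a direct reference-element computation shows that $\frac{1}{2}\bigl(\Pi_{\rm RT}q|_{K_1}(P_e)+\Pi_{\rm RT}q|_{K_2}(P_e)\bigr)\cdot\tau_e=q(P_e)\cdot\tau_e$ for every linear $q$. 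The reproduction propagates to boundary midpoints because the extrapolation formula $K_hq(P)=2K_hq(N_c)-K_hq(\tilde P)$ is a second-order finite-difference stencil that is itself exact on linears.

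With linear reproduction in hand, I would apply a Bramble--Hilbert argument edge by edge. For each $e$, let $\omega_e$ denote the patch (the parallelogram $N$, or for a boundary edge the union with the neighbor used in the extrapolation) on which the value $K_h\Pi_{\rm RT}q(P_e)$ depends; by shape regularity $\diam\omega_e\lesssim h$. Choosing $L\in(P_1(\omega_e))^2$ as the $L^2$-best linear approximant and using $(I_{\rm CR}^v q-K_h\Pi_{\rm RT}q)(P_e)=(I_{\rm CR}^v (q-L)-K_h\Pi_{\rm RT}(q-L))(P_e)$, a scaling argument together with the two-dimensional embedding $H^2(\omega_e)\hookrightarrow C^0(\overline{\omega_e})$ yields
\[
|(I_{\rm CR}^v q-K_h\Pi_{\rm RT}q)(P_e)|\lesssim h\,|q|_{2,\omega_e}.
\]
Squaring, multiplying by $h^2$ and summing over edges with the finite-overlap property of the patches produces $\|I_{\rm CR}^v q-K_h\Pi_{\rm RT}q\|_{0,\Omega}^2\lesssim h^4|q|_{2,\Omega}^2$, and combining with the CR interpolation estimate completes the proof.

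The main obstacle is the tangential reproduction at interior edges: it is not visible from the RT degrees of freedom and depends essentially on the parallelogram geometry of the uniform mesh, requiring an explicit reference-element computation. The boundary extrapolation adds some bookkeeping in identifying $\omega_e$ and keeping its diameter $\mathcal{O}(h)$, but is otherwise routine; similarly, the concluding Bramble--Hilbert step is standard once the reproduction property is available.
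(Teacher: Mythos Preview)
Your proposal is correct and shares the key ingredient with the paper's approach (following Brandts~\cite{Brandts1994}, whose structure is reproduced verbatim in the proof of Theorem~\ref{postHHJ} for the HHJ case): the linear reproduction property $K_h\Pi_{\rm RT}r=r$ on each element for $r$ linear on the surrounding patch, obtained via the point-reflection symmetry of the parallelogram $N=K_1\cup K_2$, followed by a Bramble--Hilbert argument. The organizational difference is that Brandts works elementwise and establishes $L^\infty$ stability $\|K_h\Pi_{\rm RT}\tau\|_{0,\infty,K}\lesssim\|\tau\|_{0,\infty,\tilde K}$ directly, then applies polynomial approximation in $L^\infty$, whereas you route through the auxiliary CR interpolant $I_{\rm CR}^v q$ and reduce the comparison to nodal differences at edge midpoints; both localizations lead to the same $h^2|q|_{2,\tilde K}$ bound and neither buys anything substantial over the other.
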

Combining Theorem \ref{superRT}  and  Theorem \ref{postprocessinghighorder} concludes that the post-processing operator $K_h$ also improves the order of approximation of $\sigma_{\rm RT}$.
\begin{corollary}
\label{superconvergenceRT}
Let $\sigma\in(H^2(\Omega))^2$ and $\sigma_{RT}$ be the solutions of \eqref{MixedPoisson} and \eqref{DiscreteMixedPoisson}, respectively. There holds that
\begin{equation*}
  \|\sigma-K_h\sigma_{\rm RT}\|_{0,\Omega}\lesssim h^{\frac{3}{2}}(\|\sigma\|_{\frac{3}{2},\Omega}+h^{\frac{1}{2}}|\sigma|_{2,\Omega}).
\end{equation*}
\end{corollary}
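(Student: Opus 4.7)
The plan is to split the error by inserting the post-processed interpolant $K_h\Pi_{\rm RT}\sigma$ and apply the triangle inequality:
\begin{equation*}
\|\sigma-K_h\sigma_{\rm RT}\|_{0,\Omega}\leq \|\sigma-K_h\Pi_{\rm RT}\sigma\|_{0,\Omega}+\|K_h(\Pi_{\rm RT}\sigma-\sigma_{\rm RT})\|_{0,\Omega}.
\end{equation*}
The first term is bounded directly by Theorem \ref{postprocessinghighorder}, which gives a contribution of order $h^{2}|\sigma|_{2,\Omega}$; this is absorbed into the target bound since $h^{2}|\sigma|_{2,\Omega}\lesssim h^{3/2}\cdot h^{1/2}|\sigma|_{2,\Omega}$. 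Hence only the second, discrete term needs careful treatment.

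For the second term the plan is to establish an $L^2$-stability estimate for $K_h$ on ${\rm RT}(\cT)$, namely
\begin{equation*}
\|K_h q\|_{0,\Omega}\lesssim \|q\|_{0,\Omega}\quad\text{for all } q\in {\rm RT}(\cT),
\end{equation*}
and then apply it with $q=\Pi_{\rm RT}\sigma-\sigma_{\rm RT}\in {\rm RT}(\cT)$ together with Theorem \ref{superRT}. To prove the stability I would argue element-by-element: on a triangle $K$ the $L^2$-norm of a Crouzeix--Raviart function is equivalent, up to a factor $|K|^{1/2}$, to the Euclidean norm of its three edge-midpoint values, and the analogous statement holds for ${\rm RT}(\cT)$ functions sampled at edge midpoints. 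For interior edges the definition of $K_h$ is a plain average of two neighboring element values at the midpoint, which is an $\ell^\infty$-contraction and so harmless. Collecting all local contributions and using shape regularity to bound the overlap of contributing patches yields the global $L^2$-stability.

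The main obstacle will be the boundary extrapolation formula $K_h q(P)=2K_h q(N_c)-K_h q(\tilde{P})$, which is not a convex combination and could in principle inflate $L^2$-norms. Because the formula couples only a bounded number of midpoint values inside a fixed-size patch around each boundary edge, a scaling argument on the reference parallelogram, combined with Lemma \ref{Boundarynorm} if one wishes to localize the analysis to $\Omega_h$, still gives a uniform constant. I would then combine the two pieces to obtain
\begin{equation*}
\|K_h(\Pi_{\rm RT}\sigma-\sigma_{\rm RT})\|_{0,\Omega}\lesssim \|\Pi_{\rm RT}\sigma-\sigma_{\rm RT}\|_{0,\Omega}\lesssim h^{3/2}\bigl(\|\sigma\|_{3/2,\Omega}+h^{1/2}|\sigma|_{2,\Omega}\bigr),
\end{equation*}
by Theorem \ref{superRT}, and this dominates the first term, producing the claimed estimate.
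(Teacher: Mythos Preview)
Your proposal is correct and follows exactly the route the paper intends: the paper states the corollary as an immediate combination of Theorem \ref{superRT} and Theorem \ref{postprocessinghighorder} without further detail, and your triangle-inequality splitting with $K_h\Pi_{\rm RT}\sigma$ as the pivot, together with the $L^2$-stability of $K_h$ on ${\rm RT}(\cT)$, is precisely how that combination is made rigorous. The only minor remark is that Lemma \ref{Boundarynorm} is not needed for the boundary part of the stability argument---the extrapolation formula involves only a fixed number of midpoint values in a shape-regular patch, so a direct scaling argument suffices, as you in fact indicate.
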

\subsection{The superconvergence result of the Crouzeix--Raviart element}
 \begin{theorem}
  \label{The:CRsuperconvergence}
 Let $u\in H^3(\Omega)$ and $u_{\rm CR}$  be the solutions of \eqref{Poisson} and \eqref{CRPoisson}, respectively. Further, suppose that $f\in W^{1,\infty}(\Omega)$, then we have
  \begin{equation}
 \label{CRsuperconvergence}
 \|\nabla u-K_h\nabla_{\rm NC}u_{\rm CR}\|_{0,\Omega}\lesssim h^{\frac{3}{2}}(\|u\|_{\frac{5}{2},\Omega}+h^{\frac{1}{2}}|u|_{3,\Omega}+h^{\frac{1}{2}}|f|_{1,\infty,\Omega}).
 \end{equation}
 \end{theorem}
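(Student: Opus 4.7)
The plan is to use Marini's identity \eqref{equivalencePoisson} to transfer the superconvergence of the mixed stress $\sigma_{\rm RT}$ furnished by Corollary~\ref{superconvergenceRT} over to the nonconforming gradient $\nabla_{\rm NC}u_{\rm CR}$, while separately controlling the two discrepancies this transfer introduces. Setting $\sigma=\nabla u$, so that $\|\sigma\|_{3/2,\Omega}\lesssim\|u\|_{5/2,\Omega}$ and $|\sigma|_{2,\Omega}=|u|_{3,\Omega}$, I would split
\begin{equation*}
\nabla u-K_h\nabla_{\rm NC}u_{\rm CR} = (\sigma-K_h\sigma_{\rm RT})+K_h\bigl(\sigma_{\rm RT}-\nabla_{\rm NC}\bar{u}_{\rm CR}\bigr)+K_h\nabla_{\rm NC}\bigl(\bar{u}_{\rm CR}-u_{\rm CR}\bigr)
\end{equation*}
and estimate each summand separately.

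Corollary~\ref{superconvergenceRT} immediately handles the first summand and produces the $h^{3/2}(\|u\|_{5/2,\Omega}+h^{1/2}|u|_{3,\Omega})$ part of the advertised bound. For the last summand, an $L^2$-stability estimate $\|K_hq\|_{0,\Omega}\lesssim\|q\|_{0,\Omega}$ for piecewise constant $q$ follows from the fact that the nodal values of $K_hq$ are two-point averages at interior edges and bounded three-point extrapolations at boundary edges, together with the CR-nodal/$L^2$ equivalence $\|v\|_{0,\Omega}^2\sim h^2\sum_{e\in\cE}|v(P_e)|^2$; combining this with \eqref{DifferencePoisson} yields $\|K_h\nabla_{\rm NC}(\bar{u}_{\rm CR}-u_{\rm CR})\|_{0,\Omega}\lesssim h^2|f|_{1,\Omega}$, which is of the required order.

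The decisive summand is the middle one, and it is where the uniformity of the mesh enters crucially. By \eqref{equivalencePoisson}, on each triangle $K$ the difference $\sigma_{\rm RT}-\nabla\bar{u}_{\rm CR}$ equals $-\tfrac{f_K}{2}(x-{\rm Mid}(K))$. At the midpoint $P$ of an interior edge shared by two triangles $K_1,K_2$ that form a parallelogram, the parallelogram symmetry forces $P-{\rm Mid}(K_1)=-(P-{\rm Mid}(K_2))$, hence
\begin{equation*}
K_h(\sigma_{\rm RT}-\nabla_{\rm NC}\bar{u}_{\rm CR})(P) = -\tfrac{1}{4}\bigl(f_{K_1}-f_{K_2}\bigr)\bigl(P-{\rm Mid}(K_1)\bigr),
\end{equation*}
which is $\cO(h^2|f|_{1,\infty})$ pointwise; squaring, applying the CR-nodal/$L^2$ equivalence, and summing over interior edges yields the desired $h^2|f|_{1,\infty,\Omega}$ bound for the interior contribution.

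The main obstacle I anticipate is the boundary strip, where the averaging cancellation is not directly available. However, the definition of $K_h$ at a boundary midpoint $P$ expresses $K_hq(P)$ as a bounded combination of $K_hq(N_c)$ and $K_hq(\tilde P)$ at two interior-edge midpoints of the same parallelogram, both of which inherit the parallelogram cancellation from the interior analysis, so the pointwise estimate $\cO(h^2|f|_{1,\infty})$ persists on $\cE(\partial\Omega)$ and the boundary contribution is absorbed (with Lemma~\ref{Boundarynorm} providing the finer $h^{1/2}$ accounting in the strip $\Omega_h$). Collecting the three estimates then produces \eqref{CRsuperconvergence}.
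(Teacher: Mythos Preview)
Your proposal is correct and follows essentially the same three-term decomposition and parallelogram-cancellation argument as the paper's proof; the paper likewise handles the last summand via the $L^2$-stability of $K_h$ on piecewise constants (stated there as an inverse estimate) together with \eqref{DifferencePoisson}. The only superfluous remark is your appeal to Lemma~\ref{Boundarynorm} for the middle summand: once the pointwise $\cO(h^2|f|_{1,\infty})$ bound holds at \emph{every} edge midpoint (interior and boundary alike, via the extrapolation rule), the CR-nodal/$L^2$ equivalence already gives the global $\cO(h^2|f|_{1,\infty,\Omega})$ bound with no boundary-strip argument needed.
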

 \begin{proof}
 Using the equivalence equality \eqref{equivalencePoisson}, for $e=K_1\cap K_2$ and the center $P$ of $e$, there holds that
\begin{equation*}
|K_h(\nabla_{\rm NC}\bar{u}_{\rm CR}-\sigma_{\rm RT})(P)|=|\frac{f_{K_1}}{4}(P-{\rm Mid}(K_1))+\frac{f_{K_2}}{4}(P-{\rm Mid}(K_2))|
\end{equation*}
Since $K_1$ and $K_2$ form a parallelogram, we have $P-{\rm Mid}(K_1)={\rm Mid}(K_2)-P$. This yields that
\begin{equation*}
\begin{split}
|K_h(\nabla_{\rm NC}\bar{u}_{\rm CR}-\sigma_{\rm RT})(P)|&=\frac{1}{4}|(f_{K_1}-f_{K_2})(P-{\rm Mid}(K_1))|\\
&\lesssim h^2|f|_{1,\infty,\Omega}.
\end{split}
\end{equation*}
Suppose that $\phi_i,1\leq i\leq3$ denote the nodal basis functions on $K$ of $(W_{\rm CR})^2$. Hence, by the definition of $K_h$ and scaling arguments, there holds that
\begin{equation*}
 \|K_h(\nabla_{\rm NC}\bar{u}_{\rm CR}-\sigma_{\rm RT})\|^2_{0,K}\lesssim h^4|f|_{1,\infty,\Omega}^2\sum_{i=1}^3\|\phi_i\|^2_{0,K}\lesssim h^6|f|_{1,\infty,\Omega}^2.
 \end{equation*}
 Summing over all triangles $K\in\cT$ gives that
\begin{equation}
\label{superCR1}
\begin{split}
\|K_h(\nabla_{\rm NC}\bar{u}_{\rm CR}-\sigma_{\rm RT})\|_{0,\Omega}\lesssim h^2|f|_{1,\infty,\Omega}.
\end{split}
\end{equation}
Since $\nabla_{\rm NC}\bar{u}_{\rm CR}-\nabla_{\rm NC}u_{\rm CR}$ is a piecewise constant, the inverse estimate and \eqref{DifferencePoisson} yield that
\begin{equation}
\label{superCR2}
 \|K_h(\nabla_{\rm NC}\bar{u}_{\rm CR}-\nabla_{\rm NC}u_{\rm CR})\|_{0,\Omega}\lesssim\|\nabla_{\rm NC}(\bar{u}_{\rm CR}-u_{\rm CR})\|_{0,\Omega}\lesssim h^2|f|_{1,\Omega}\lesssim h^2|f|_{1,\infty,\Omega}.
\end{equation}
The triangle inequality plus Corollary \ref{superconvergenceRT}, \eqref{superCR1} and \eqref{superCR2} complete the proof.
\end{proof}

 \section{The plate bending problem and its Morley element}
Given $f\in L^2(\Omega)$, the plate bending model problem finds $u\in H^2_0(\Omega)$ such that
\begin{equation}\label{Plateequation}
  (\nabla^2u,\nabla^2v)=(f,v)\quad\text{for all }v\in H^2_0(\Omega).
\end{equation}
Given any space $V$, we define $(V)^4_{\rm s}$ as follows:
$$(V)^4_{\rm s}:=\{\tau=(\tau_{ij}),1\leq i\leq j\leq2:\tau_{ij}\in V,\tau_{12}=\tau_{21}\}.$$
Given $K\in\cT$, $\nu$ denotes the unit outward normal to $\partial K$ and $t$  the unit tangent to $\partial K$. Given $\tau\in (H^1(K))^4_{\rm s}$, we set
\begin{eqnarray*}
% \nonumber to remove numbering (before each equation)
 M_{\nu\nu}(\tau) &=& \nu^T\tau\nu, \\
  M_{\nu t}(\tau)&=&\nu^T\tau t.
\end{eqnarray*}
By introducing an auxiliary variable $\sigma:=\nabla^2u$, the mixed formulation of \eqref{Plateequation} seeks $(\sigma,u)\in S\times D$, see \cite{Johnson},
 \begin{equation}\label{MixedPlate}
\begin{split}
 &(\sigma, \tau)+\sum_{K\in\mathcal{T}}-(\tau,\nabla^2u)_{L^2(K)}+\int_{\partial K}M_{\nu\nu}(\tau)\frac{\partial u}{\partial \nu}ds=0\quad \text{ for any }\tau\in S,\\
 &\sum_{K\in\mathcal{T}}-(\sigma,\nabla^2v)_{L^2(K)}+\int_{\partial K}M_{\nu\nu}(\sigma)\frac{\partial v}{\partial \nu}ds=(-f, v)\quad \text{ for any }v\in D,
\end{split}
 \end{equation}
 where
 \begin{equation*}
  \begin{split}
  S=&\{\tau\in(L^2(\Omega))^4_{\rm s}:\tau|_K\in (H^1(K))^4_{\rm s}\text{ for all }K\in\cT,\\
  &\text{ and }M_{\nu\nu}(\tau)\text{ is continuous across interelement edges}\},\\
  D=&\{v\in H^1_0(\Omega):v|_K\in H^2(K)\text{ for all }K\in\mathcal{T}\}.
  \end{split}
 \end{equation*}

 The Morley element space \cite{Morley(1968)} $V_{\rm M}$ over $\cT$ is defined by
 \begin{equation*}
 \begin{split}
   V_{\rm M}:=\big\{&v\in L^2(\Omega):v|_K\in P_2(K)\text{ for each }K\in\cT, v\text{ is continuous at each}\\&\text{interior vertex and vanishes on each boundary
   vertex},
   \int_e[\frac{\partial v}{\partial\nu_e}]ds=0\\
    & \text{for all }e\in\cE(\Omega),\text{ and } \int_e\frac{\partial v}{\partial\nu_e} ds=0\text{ for all }e\in\cE(\partial\Omega)\big\}.
   \end{split}
 \end{equation*}
The Morley element method of Problem \eqref{Plateequation} finds $u_{\rm M}\in V_{\rm M}$ such that
\begin{equation}\label{PlateDiscrete}
  (\nabla^2_{\rm NC}u_{\rm M},\nabla^2_{\rm NC}v)=(f,v)\quad\text{for all }v\in V_{\rm M}.
\end{equation}

To analyze the superconvergence of the Morley element, we introduce the first order Hellan--Herrmann--Johnson element \cite{Johnson}. Define
  \begin{equation*}
  \begin{split}
  {\rm HHJ}(\cT)=&\{\tau\in S:\tau|_K\in (P_0(K))^4_{\rm s}\text{ for any }K\in\mathcal{T}\},\\
  {\rm U}_{\rm HHJ}(\cT)=&\{v\in H^1_0(\Omega):v|_K\in P_1(K)\text{ for any }K\in\mathcal{T}\}.
  \end{split}
 \end{equation*}
The first order Hellan--Herrmann--Johnson element of Problem \eqref{MixedPlate} finds $(\sigma_{\rm HHJ},u_{\rm HHJ})\in {\rm HHJ}(\cT)\times  {\rm U}_{\rm HHJ}(\cT)$ such that
 \begin{equation}\label{MixedplateDiscrete}
\begin{split}
 &(\sigma_{\rm HHJ}, \tau)+\sum_{K\in\mathcal{T}}\int_{\partial K}M_{\nu\nu}(\tau)\frac{\partial u_{\rm HHJ}}{\partial\nu}ds=0\quad \text{ for any }\tau\in {\rm HHJ}(\cT),\\
 &\sum_{K\in\mathcal{T}}\int_{\partial K}M_{\nu\nu}(\sigma_{\rm HHJ})\frac{\partial v}{\partial\nu}ds=(-f, v)\quad \text{ for any }v\in  {\rm U}_{\rm HHJ}(\cT).
\end{split}
 \end{equation}

Given $v\in H^2_0(\Omega)\cup V_{\rm M}$, define the interpolation operator $\Pi_{\rm D}:H^2_0(\Omega)\cup V_{\rm M}\rightarrow {\rm U}_{\rm HHJ}(\cT)$ by
\begin{equation}\label{interP1con}
  \Pi_{\rm D}v(z)=v(z) \text{ for each vertex $z$ of } \cT.
\end{equation}
 Hence, we introduce the auxiliary method: The modified Morley element finds
 $\bar{u}_{\rm M}\in V_{\rm M}$ such that
\begin{equation}\label{PlateAxDiscrete}
  (\nabla^2_{\rm NC}\bar{u}_{\rm M},\nabla^2_{\rm NC}v)=(f,\Pi_{\rm D}v)\quad\text{for all }v\in V_{\rm M}.
\end{equation}

Arnold et al. \cite{Arnold} proved the following equivalence between the Hellan--Herrmann--Johnson element and the modified Morley element:
\begin{equation}
\label{equvilencePlate}
  \sigma_{\rm HHJ}=\nabla^2_{\rm NC}\bar{u}_{\rm M},u_{\rm HHJ}=\Pi_{\rm D}\bar{u}_{\rm M},
\end{equation}
and moreover
\begin{equation}
\label{DifferecePlate}
  \|\nabla_{\rm NC}^2(u_{\rm M}-\bar{u}_{\rm M})\|_{0,\Omega}\lesssim h^2\|f\|_{0,\Omega}.
\end{equation}
\section{Superconvergence analysis of the Morley element}
In this section, following the similar arguments for the Raviart--Thomas element in \cite{Brandts1994}, we prove the superconvergence  result of the Hellan--Herrmann--Johnson element. Then, based on this result and the equivalence \eqref{equvilencePlate}, we derive the superconvergence result  of the Morley element.
\subsection{The superconvergence result of the Hellan--Herrmann--Johnson element}
First we introduce the interpolation  operator $\Pi_{\rm HHJ}:S\rightarrow {\rm HHJ}(\cT)$ as in \cite{BrezziRaviart1978}:
 \begin{equation}\label{interDefHHJ}
  \int_eM_{\nu\nu}(\Pi_{\rm HHJ}\tau)ds=\int_eM_{\nu\nu}(\tau)ds\quad\text{for all } e\in\mathcal{E}.
 \end{equation}
 Moreover if $\tau\in (H^1(\Omega))^4_s$,
 \begin{equation}\label{interpolationHHJ}
  \|\tau-\Pi_{\rm HHJ}\tau\|_{0,\Omega}\lesssim h|\tau|_{1,\Omega}.
 \end{equation}
An integration by parts yields that the following Green's formulae holds for any $\tau\in (H^1(K))^4_{\rm s}$ and $v\in H^2(K)$,
\begin{equation}\label{greenformulae}
  \int_K\tau:\nabla^2vdx=-\int_K\div\tau\cdot\nabla vdx+\int_{\partial K}M_{\nu\nu}(\tau)\frac{\partial v}{\partial\nu}ds+\int_{\partial K}M_{\nu t}(\tau)\frac{\partial v}{\partial t}ds.
\end{equation}
We have the following result.
\begin{lemma}
Let $\sigma$ and $\sigma_{\rm HHJ}$ be the solutions of \eqref{MixedPlate} and \eqref{MixedplateDiscrete}, respectively. Then
\begin{equation}\label{HHJeq4}
 (\sigma_{\rm HHJ}-\sigma,\sigma_{\rm HHJ}-\Pi_{\rm HHJ}\sigma)=0.
\end{equation}
\end{lemma}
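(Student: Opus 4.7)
The plan is to substitute $\tau := \sigma_{\rm HHJ} - \Pi_{\rm HHJ}\sigma \in {\rm HHJ}(\cT)\subset S$ as a test function in both the continuous equation \eqref{MixedPlate} and the discrete equation \eqref{MixedplateDiscrete}. Because $\tau|_K\in (P_0(K))^4_{\rm s}$ satisfies $\div\tau=0$ elementwise, Green's formula \eqref{greenformulae} yields
$(\sigma,\tau)=(\nabla^2u,\tau)=\sum_{K\in\cT}\int_{\partial K}\bigl[M_{\nu\nu}(\tau)\frac{\partial u}{\partial\nu}+M_{\nu t}(\tau)\frac{\partial u}{\partial t}\bigr]\,ds$.
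Since $u\in H^2_0(\Omega)$ forces $\nabla u=0$ on $\partial\Omega$ and gives continuity of $\nabla u$ across interior edges, and $M_{\nu\nu}(\tau)$ is continuous across edges for $\tau\in S$, the $M_{\nu\nu}$-contribution in this sum telescopes to zero. Combining this with the discrete identity $(\sigma_{\rm HHJ},\tau)=-\sum_{K\in\cT}\int_{\partial K}M_{\nu\nu}(\tau)\frac{\partial u_{\rm HHJ}}{\partial\nu}\,ds$ obtained from \eqref{MixedplateDiscrete} yields
$$(\sigma_{\rm HHJ}-\sigma,\tau)=-\sum_{K\in\cT}\int_{\partial K}M_{\nu\nu}(\tau)\frac{\partial u_{\rm HHJ}}{\partial\nu}\,ds-\sum_{K\in\cT}\int_{\partial K}M_{\nu t}(\tau)\frac{\partial u}{\partial t}\,ds.$$

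The $M_{\nu\nu}$-sum can be handled by combining the definition \eqref{interDefHHJ} of $\Pi_{\rm HHJ}$ with the displacement equations. Since $\frac{\partial u_{\rm HHJ}}{\partial\nu}$ is edgewise constant (as $u_{\rm HHJ}\in {\rm U}_{\rm HHJ}(\cT)$ is piecewise $P_1$), the edge-moment identity \eqref{interDefHHJ} immediately yields $\sum_{K\in\cT}\int_{\partial K}M_{\nu\nu}(\sigma-\Pi_{\rm HHJ}\sigma)\frac{\partial u_{\rm HHJ}}{\partial\nu}\,ds=0$. Setting $v=u_{\rm HHJ}$ in the continuous and discrete displacement equations (both reduce to boundary integrals since $\nabla^2u_{\rm HHJ}=0$ elementwise) gives the Galerkin orthogonality $\sum_{K\in\cT}\int_{\partial K}M_{\nu\nu}(\sigma-\sigma_{\rm HHJ})\frac{\partial u_{\rm HHJ}}{\partial\nu}\,ds=0$. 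Subtracting the two identities produces $\sum_{K\in\cT}\int_{\partial K}M_{\nu\nu}(\tau)\frac{\partial u_{\rm HHJ}}{\partial\nu}\,ds=0$, killing the first sum.

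The $M_{\nu t}$-sum is the main obstacle. Because $M_{\nu t}(\tau)$ may jump across interior edges (only $M_{\nu\nu}$-continuity is built into $S$), this sum collapses, after pairing opposite sides of each edge and discarding boundary edges where $\nabla u=0$, to $-\sum_{e\in\cE(\Omega)}\int_e[M_{\nu t}(\tau)]\frac{\partial u}{\partial t_e}\,ds$; and since $[M_{\nu t}(\tau)]|_e$ is constant on each edge, integration along $e$ converts this to $-\sum_{e\in\cE(\Omega)}[M_{\nu t}(\tau)]|_e\bigl(u(b_e)-u(a_e)\bigr)$, a weighted sum of values of $u$ at interior vertices. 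To show this vanishes, I would exploit the equivalence $\sigma_{\rm HHJ}=\nabla^2_{\rm NC}\bar u_{\rm M}$ from \eqref{equvilencePlate} to reinterpret $[M_{\nu t}(\sigma_{\rm HHJ})]|_e$ as a jump of the Morley Hessian across $e$, combine it with the edge-moment definition of $\Pi_{\rm HHJ}\sigma$, and use the Morley continuity of $\bar u_{\rm M}$ at interior vertices (together with $\bar u_{\rm M}$ vanishing at boundary vertices) to drive a vertex-by-vertex cancellation. This vertex-based cancellation argument is the key technical step.
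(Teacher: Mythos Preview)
Your setup and the handling of the $M_{\nu\nu}$-sum are correct and essentially parallel the paper's argument. The gap is in your treatment of the $M_{\nu t}$-sum. The proposed route through the Morley equivalence \eqref{equvilencePlate} and a ``vertex-by-vertex cancellation'' is not workable as stated: the definition \eqref{interDefHHJ} of $\Pi_{\rm HHJ}$ controls only $M_{\nu\nu}$ on edges, so it gives you no direct grip on $[M_{\nu t}(\Pi_{\rm HHJ}\sigma)]|_e$, and the Morley degrees of freedom (vertex values of $\bar u_{\rm M}$ and edge-normal averages of $\partial_\nu\bar u_{\rm M}$) do not obviously force the individual vertex coefficients $\sum_{e\ni z}\pm[M_{\nu t}(\tau)]|_e$ to vanish. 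You have not indicated how these two ingredients combine to produce the cancellation, and I do not see that they do without essentially re-deriving the simpler argument below.

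The paper's resolution is far more direct and, in fact, reuses the mechanism you have already built. Since $M_{\nu t}(\tau)$ is constant on each edge, the tangential sum
$\sum_{K}\int_{\partial K}M_{\nu t}(\tau)\frac{\partial u}{\partial t}\,ds$
depends only on the values of $u$ at the vertices; hence one may replace $u$ by its conforming $P_1$ interpolant $\Pi_{\rm D}u\in {\rm U}_{\rm HHJ}(\cT)$ without changing the sum. Now apply Green's identity \eqref{greenformulae} on each $K$ with $v=\Pi_{\rm D}u$ (noting $\div\tau=0$ and $\nabla^2\Pi_{\rm D}u=0$) to convert this tangential sum into $-\sum_{K}\int_{\partial K}M_{\nu\nu}(\tau)\frac{\partial\Pi_{\rm D}u}{\partial\nu}\,ds$. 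This is exactly the form of the $M_{\nu\nu}$-sum you already killed with $u_{\rm HHJ}$; the identical argument (edgewise constancy of $\partial_\nu\Pi_{\rm D}u$ plus \eqref{interDefHHJ}, and the Galerkin relation \eqref{HHJeq2} with $v=\Pi_{\rm D}u$) shows it vanishes. This is the key step you missed, and it makes the detour through the Morley element unnecessary.
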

\begin{proof}
Let $\tau\in{\rm HHJ}(\cT),v\in {\rm U}_{\rm HHJ}(\cT)$  in \eqref{MixedPlate} and \eqref{MixedplateDiscrete}, which, together with \eqref{greenformulae}, yield that
 \begin{equation}\label{HHJeq1}
   \begin{split}
   (\sigma_{\rm HHJ}-\sigma,\tau)=&-\sum_{K\in\mathcal{T}}\int_{\partial K}M_{\nu\nu}(\tau)\frac{\partial(u_{\rm HHJ}-u)}{\partial\nu}ds-\sum_{K\in\mathcal{T}}(\tau,\nabla^2u)_{L^2(K)}\\
   =&\sum_{K\in\mathcal{T}}\int_{\partial K}M_{\nu t}(\tau)\frac{\partial(u_{\rm HHJ}-u)}{\partial t}ds,
      \end{split}
 \end{equation}
  and
 \begin{equation}\label{HHJeq2}
  \sum_{K\in\mathcal{T}}\int_{\partial K}M_{\nu\nu}(\sigma_{\rm HHJ}-\sigma)\frac{\partial v}{\partial\nu}ds=0.
 \end{equation}
  By  the definition of $\Pi_{\rm D}u$ in \eqref{interP1con}, since $M_{\nu t}(\tau)$ is constant on each edge of $K$, a combination of  \eqref{HHJeq1} and \eqref{greenformulae} leads to
 \begin{equation}\label{HHJeq3}
   \begin{split}
    (\sigma_{\rm HHJ}-\sigma,\tau)=&\sum_{K\in\mathcal{T}}\int_{\partial K}M_{\nu t}(\tau)\frac{\partial(u_{\rm HHJ}-\Pi_{\rm D}u)}{\partial t}ds\\
   =&-\sum_{K\in\mathcal{T}}\int_{\partial K}M_{\nu\nu}(\tau)\frac{\partial(u_{\rm HHJ}-\Pi_{\rm D}u)}{\partial\nu}ds.
   \end{split}
 \end{equation}
  Thanks to the definition of $\Pi_{\rm HHJ}$ in \eqref{interDefHHJ}, substituting  $\tau=\sigma_{\rm HHJ}-\Pi_{\rm HHJ}\sigma,v=u_{\rm HHJ}-\Pi_{\rm D}u$ into \eqref{HHJeq2} and \eqref{HHJeq3}, respectively, yields that
  \begin{equation*}
  \begin{split}
   (\sigma_{\rm HHJ}-\sigma,\sigma_{\rm HHJ}-\Pi_{\rm HHJ}\sigma)&=-\sum_{K\in\mathcal{T}}\int_{\partial K}M_{\nu\nu}(\sigma_{\rm HHJ}-\Pi_{\rm HHJ}\sigma)\frac{\partial(u_{\rm HHJ}-\Pi_{\rm D}u)}{\partial\nu}ds\\
   &=-\sum_{K\in\mathcal{T}}\int_{\partial K}M_{\nu\nu}(\sigma_{\rm HHJ}-\sigma)\frac{\partial(u_{\rm HHJ}-\Pi_{\rm D}u)}{\partial\nu}ds\\
   &=0.
      \end{split}
  \end{equation*}
  This completes the proof.
  \end{proof}
  \begin{lemma}
  \label{lemma:HHJ}
  Let $N$ be a parallelogram forming by  two triangles $K_1,K_2$. Then for all $r\in(P_1(N))^4_{\rm s}$, we have that
  \begin{equation*}
    \int_N(r-\Pi_{\rm HHJ}r)dx=0.
  \end{equation*}
  \end{lemma}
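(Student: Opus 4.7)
The plan is to exploit the point symmetry of $N$ about its center $x_c$ and reduce the identity to a statement that can be read off from the definition of $\Pi_{\rm HHJ}$ in \eqref{interDefHHJ}. Let $\sigma:N\to N$ denote the $180^\circ$ rotation $\sigma(x)=2x_c-x$; it maps $K_1\leftrightarrow K_2$, exchanges each pair of opposite sides of $N$, and fixes the shared diagonal $d$ setwise (reversing its orientation).

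First I would split $r=r(x_c)+\tilde r$ with $\tilde r(x):=r(x)-r(x_c)$. Each scalar component of $\tilde r$ is a linear function vanishing at $x_c$, so $\tilde r\circ\sigma=-\tilde r$ componentwise. Because $\Pi_{\rm HHJ}$ reproduces constant symmetric tensors, the constant piece $r(x_c)$ contributes nothing to $\int_N(r-\Pi_{\rm HHJ}r)\,dx$; and by the center symmetry $\int_N\tilde r\,dx=0$. Writing $\tau_i:=\Pi_{\rm HHJ}\tilde r|_{K_i}$ for the two constant values and using $|K_1|=|K_2|=|N|/2$, the lemma reduces to the single identity $\tau_1+\tau_2=0$.

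To prove this I would compare the interpolation conditions on $K_1$ and $K_2$ edge by edge. Label the sides of $N$ cyclically as $f_1,f_2,f_3,f_4$ so that $\sigma$ pairs $f_1\leftrightarrow f_3$ and $f_2\leftrightarrow f_4$; the outward normals of $N$ then satisfy $\nu_{f_3}=-\nu_{f_1}$ and $\nu_{f_4}=-\nu_{f_2}$. A change of variables $x=\sigma(y)$, combined with $\tilde r\circ\sigma=-\tilde r$ and the invariance of $M_{\nu\nu}$ under $\nu\mapsto-\nu$, yields
\begin{equation*}
\int_{f_3}M_{\nu\nu}(\tilde r)\,ds=-\int_{f_1}M_{\nu\nu}(\tilde r)\,ds,\qquad \int_{d}M_{\nu\nu}(\tilde r)\,ds=0,
\end{equation*}
and the analogous identity for the pair $f_2,f_4$. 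Substituting these into \eqref{interDefHHJ} applied on each of $K_1,K_2$, and using $|f_1|=|f_3|$, $|f_2|=|f_4|$, gives $\nu^T(\tau_1+\tau_2)\nu=0$ for each of the three edge normals $\nu\in\{\nu_{f_1},\nu_{f_2},\nu_d\}$ of the triangle $K_1$.

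Because these three normals are pairwise non-parallel, the three quadratic functionals $\tau\mapsto\nu^T\tau\nu$ are linearly independent on the three-dimensional space of symmetric $2\times 2$ matrices---this is precisely the unisolvence underlying the HHJ degrees of freedom on a single element---so $\tau_1+\tau_2=0$ follows and the lemma is proved. The main obstacle I anticipate is keeping the signs straight in the change-of-variables on the boundary integrals; an alternative route would be to fix a reference parallelogram, verify the lemma there by direct computation, and transport the result by an affine map, but the covariant transformation of symmetric $\nu\nu$-moment degrees of freedom under a general affine map makes that approach noticeably less clean than the symmetry argument above.
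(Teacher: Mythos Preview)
Your argument is correct and follows essentially the same symmetry idea as the paper: reduce to the odd part $\tilde r$ about the center of $N$, then show that $\Pi_{\rm HHJ}\tilde r$ is itself odd (i.e.\ $\tau_1+\tau_2=0$), so that $r-\Pi_{\rm HHJ}r$ is odd and integrates to zero over the centrally symmetric parallelogram. The paper compresses the whole proof into two sentences, merely asserting that ``$\Pi_{\rm HHJ}r$ is odd as well''; your edge-pairing computation is precisely the justification of that assertion and differs only in level of detail, not in substance.
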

  \begin{proof}
  We may assume that $N$ is centered around the origin and, since $r=\Pi_{\rm HHJ}r$ whenever $r$ is constant, take $r\in(P_1(N))^4_{\rm s}$ zero at the origin
  and thus odd. But then $\Pi_{\rm HHJ}r$ is odd as well, which completes the proof.
  \end{proof}
 We recall some notations in \cite{Brandts1994}. Denote a parallelogram consisting of two triangles sharing a side with normal $f_i$ by $N_{f_i},(i=1,2,3)$. For each $i=1,2,3$, the domain $\Omega$ can be partitioned into parallelograms $N_{f_i}$ and some resulting boundary triangles which we denote by $T_{f_i}$. For an example of the definitions and notations concerning the triangulations, see \reffig{uniformtriangulation}.
  \begin{figure}[!ht]
  \centering
  % Requires \usepackage{graphicx}
  \includegraphics[width=12cm,height=5cm]{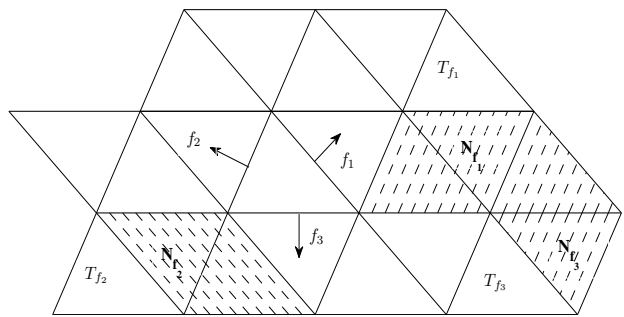}\\
  \caption{A uniform triangulation of $\Omega$}\label{uniformtriangulation}
\end{figure}
  \begin{theorem}
  \label{superHHJ}
Let $\sigma\in (H^2(\Omega))^4_{\rm s}$ and $\sigma_{\rm HHJ}$ be the solutions of \eqref{MixedPlate} and \eqref{MixedplateDiscrete}, respectively. Then
  \begin{equation*}
    \|\sigma_{\rm HHJ}-\Pi_{\rm HHJ}\sigma\|_{0,\Omega}\lesssim h^{\frac{3}{2}}(\|\sigma\|_{\frac{3}{2},\Omega}+h^{\frac{1}{2}}|\sigma|_{2,\Omega}).
  \end{equation*}
  \end{theorem}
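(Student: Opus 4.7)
The plan is to mimic Brandts' superconvergence argument for the Raviart--Thomas element, now adapted to the Hellan--Herrmann--Johnson setting. The starting point is the orthogonality relation \eqref{HHJeq4}: writing $\tau := \sigma_{\rm HHJ}-\Pi_{\rm HHJ}\sigma \in {\rm HHJ}(\cT)$, one has
\begin{equation*}
\|\tau\|_{0,\Omega}^2=(\sigma_{\rm HHJ}-\Pi_{\rm HHJ}\sigma,\tau)=(\sigma-\Pi_{\rm HHJ}\sigma,\tau),
\end{equation*}
so it suffices to bound $(\sigma-\Pi_{\rm HHJ}\sigma,\tau)$ by $h^{3/2}(\|\sigma\|_{3/2,\Omega}+h^{1/2}|\sigma|_{2,\Omega})\|\tau\|_{0,\Omega}$.

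Next I would exploit the uniformity of the mesh by splitting $\Omega$, for each of the three edge directions $f_i$, into the parallelograms $\{N_{f_i}\}$ and the boundary triangles $\{T_{f_i}\}$ introduced just before the theorem, and then average the resulting three identities so that every interior parallelogram is treated symmetrically. On a generic interior parallelogram $N$, let $\sigma_N^1\in(P_1(N))^4_{\rm s}$ be a fixed polynomial approximation of $\sigma$ (e.g.\ an averaged Taylor polynomial at the center of $N$). Lemma \ref{lemma:HHJ} gives
\begin{equation*}
\int_N(\sigma_N^1-\Pi_{\rm HHJ}\sigma_N^1)\,dx=0,
\end{equation*}
so that, pairing with the constant part of $\tau$ on $N$ obtained from the continuity of $M_{\nu\nu}(\tau)$ across the internal edge, this piece contributes zero. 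The remaining piece is controlled by standard $L^2$ approximation on $N$, yielding a local bound
\begin{equation*}
\bigl|(\sigma-\Pi_{\rm HHJ}\sigma,\tau)_N\bigr|\lesssim h^2|\sigma|_{2,N}\,\|\tau\|_{0,N}.
\end{equation*}
Summing over the $N_{f_i}$ and applying the Cauchy--Schwarz inequality produces an $h^2|\sigma|_{2,\Omega}\|\tau\|_{0,\Omega}$ contribution.

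For the boundary triangles $T_{f_i}\subset\Omega_h$ one cannot pair them into parallelograms, so I would only use the basic approximation property \eqref{interpolationHHJ} together with Lemma \ref{Boundarynorm}:
\begin{equation*}
\Bigl\|\sigma-\Pi_{\rm HHJ}\sigma\Bigr\|_{0,\bigcup_iT_{f_i}}\lesssim h\,\|\sigma\|_{1,\Omega_h}\lesssim h\cdot h^{1/2}\|\sigma\|_{3/2,\Omega},
\end{equation*}
which after Cauchy--Schwarz contributes $h^{3/2}\|\sigma\|_{3/2,\Omega}\|\tau\|_{0,\Omega}$. Dividing through by $\|\tau\|_{0,\Omega}$ gives the claimed estimate.

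The main obstacle is the first step on the interior parallelograms: one must verify that when the averaging over the three edge directions is performed, the tensorial object that multiplies $\sigma_N^1-\Pi_{\rm HHJ}\sigma_N^1$ really is constant on the whole parallelogram $N$, so that Lemma \ref{lemma:HHJ} kills it. This hinges on using the continuity of $M_{\nu\nu}(\tau)$ across the interior edge of $N$ (guaranteed by $\tau\in S$) together with a parity argument analogous to the one in the proof of Lemma \ref{lemma:HHJ}; any leftover non-constant part of $\tau$ on $N$ must be absorbed into an $O(h)$ correction and handled by ordinary approximation. Everything else, including the passage from the trace estimate on $T_{f_i}$ to the $H^{3/2}$ norm via Lemma \ref{Boundarynorm}, is routine and exactly parallel to Brandts' treatment of the Raviart--Thomas element.
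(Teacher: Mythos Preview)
Your overall strategy---start from the orthogonality \eqref{HHJeq4}, split $\Omega$ into parallelograms aligned with each edge direction, use Lemma~\ref{lemma:HHJ} on the interior, and treat the boundary triangles via \eqref{interpolationHHJ} and Lemma~\ref{Boundarynorm}---is exactly the route the paper takes. The boundary part of your argument is fine and matches the paper verbatim.

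The gap is precisely where you flag the ``main obstacle.'' On a parallelogram $N_{f_i}=K_1\cup K_2$, the test tensor $\tau=\sigma_{\rm HHJ}-\Pi_{\rm HHJ}\sigma$ is \emph{not} constant: only the scalar $M_{f_if_i}(\tau)$ is continuous across the internal edge, while the remaining two degrees of freedom of the symmetric matrix can jump. Your proposal to pair the ``constant part'' of $\tau$ with Lemma~\ref{lemma:HHJ} and then absorb the ``non-constant part'' into an $O(h)$ correction does not go through: the jump part of $\tau$ is $O(1)$ in $L^2(N)$, and the parity argument you invoke does not kill it (for linear $\sigma$, the function $\sigma-\Pi_{\rm HHJ}\sigma$ is odd on $N$, so pairing it against an odd piecewise constant gives a nonzero contribution). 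With only the plain estimate $\|\sigma-\Pi_{\rm HHJ}\sigma\|_{0,N}\lesssim h|\sigma|_{1,N}$ you are stuck at first order on the interior. ``Averaging the three identities'' does not help either, since on every $N_{f_i}$ the same non-constant part of $\tau$ reappears.

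The paper resolves this not by averaging but by \emph{decomposing} $\tau$ in the basis $\{\tau_{f_i}\}_{i=1}^3$ of $(P_0)^4_{\rm s}$ defined by $M_{f_jf_j}(\tau_{f_i})=\delta_{ij}$, writing $\tau=\sum_{i=1}^3 M_{f_if_i}(\tau)\,\tau_{f_i}$ and handling each term $I_i$ with the parallelogram partition $\{N_{f_i}\}$ aligned to that particular direction. The point is that the scalar coefficient $M_{f_if_i}(\tau)$ is continuous across the internal edge of $N_{f_i}$ (this is exactly the HHJ conformity), hence constant on all of $N_{f_i}$, and $\tau_{f_i}$ is a fixed constant matrix; so $M_{f_if_i}(\tau)\tau_{f_i}$ is genuinely constant on $N_{f_i}$ and can be pulled outside the integral, after which Lemma~\ref{lemma:HHJ} and Bramble--Hilbert give the $h^2$ gain. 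Once you replace your averaging step by this basis decomposition, the rest of your sketch is correct.
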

  \begin{proof}
  First because of \eqref{HHJeq4}, we find that
  \begin{equation*}
   (\sigma_{\rm HHJ}-\Pi_{\rm HHJ}\sigma,\sigma_{\rm HHJ}-\Pi_{\rm HHJ}\sigma)=(\sigma_{\rm HHJ}-\Pi_{\rm HHJ}\sigma,\sigma-\Pi_{\rm HHJ}\sigma).
  \end{equation*}
  Let $\tau_{f_i}\in (P_0(K))^4_{\rm s},1\leq i\leq 3$ denote the basis functions, i.e., $M_{f_jf_j}(\tau_{f_i})=\delta_{ij}$. Then we have the following decomposition:
  \begin{equation*}
   \begin{split}
   (\sigma_{\rm HHJ}-\Pi_{\rm HHJ}\sigma,\sigma-\Pi_{\rm HHJ}\sigma)&=\sum_{K\in\cT}\int_K(\sigma_{\rm HHJ}-\Pi_{\rm HHJ}\sigma):(\sigma-\Pi_{\rm HHJ}\sigma)dx\\
      &=\sum_{K\in\cT}\int_K\sum_{i=1}^3M_{f_if_i}(\sigma_{\rm HHJ}-\Pi_{\rm HHJ}\sigma)\tau_{f_i}:(\sigma-\Pi_{\rm HHJ}\sigma)dx\\
      &=\sum_{i=1}^3I_i
    \end{split}
  \end{equation*}
  where
  \begin{equation*}
    I_i=\sum_{K\in\cT}\int_KM_{f_if_i}(\sigma_{\rm HHJ}-\Pi_{\rm HHJ}\sigma)\tau_{f_i}:(\sigma-\Pi_{\rm HHJ}\sigma)dx.
  \end{equation*}
  Since $M_{f_if_i}(\sigma_{\rm HHJ}-\Pi_{\rm HHJ}\sigma)$ is continuous and constant on $N_{f_i}$, and since $\tau_{f_i}$ is constant on $N_{f_i}$, rewriting the sum $I_i$ as a sum over parallelogram $N_{f_i}$, boundary triangles $T_{f_i}$, we find:
    \begin{equation}\label{itemtotal}
    \begin{split}
   |I_i|\leq&\sum_{N_{f_i}}\big|M_{f_if_i}(\sigma_{\rm HHJ}-\Pi_{\rm HHJ}\sigma)\tau_{f_i}:\int_{N_{f_i}}(\sigma-\Pi_{\rm HHJ}\sigma)dx\big|\\
   &+\sum_{T_{f_i}}\big|\int_{T_{f_i}}M_{f_if_i}(\sigma_{\rm HHJ}-\Pi_{\rm HHJ}\sigma)\tau_{f_i}:(\sigma-\Pi_{\rm HHJ}\sigma)dx\big|.
  \end{split}
  \end{equation}
 Denote $\partial\Omega_{f_i}$ the union of the boundary triangle $T_{f_i}$. In bounding \eqref{itemtotal} we use the Cauchy-Schwarz inequality and the estimate
  \begin{equation*}
    |M_{f_if_i}(\sigma_{\rm HHJ}-\Pi_{\rm HHJ}\sigma)\tau_{f_i}|\lesssim h^{-1}\|\sigma_{\rm HHJ}-\Pi_{\rm HHJ}\sigma\|_{0,N_{f_i}}.
  \end{equation*}
  which results in
  \begin{equation}\label{itemtotal1}
  \begin{split}
   |I_i|\lesssim& h^{-1}\|\sigma_{\rm HHJ}-\Pi_{\rm HHJ}\sigma\|\left(\sum_{N_{f_i}}\bigg|\int_{N_{f_i}}(\sigma-\Pi_{\rm HHJ}\sigma)dx\bigg|^2\right)^{\frac{1}{2}}\\
   &+\|\sigma_{\rm HHJ}-\Pi_{\rm HHJ}\sigma\|_{0,\partial\Omega_{f_i}}\|\sigma-\Pi_{\rm HHJ}\sigma\|_{0,\partial\Omega_{f_i}}.
   \end{split}
  \end{equation}
  Define the linear functional $\mathcal{F}$ on $(H^2(N_{f_i}))^4_{\rm s}$ by
  \begin{equation*}
  \mathcal{F}(\tau)=\int_{N_{f_i}}(\tau-\Pi_{\rm HHJ}\tau)dx,\tau\in(H^2(N_{f_i}))^4_{\rm s}.
  \end{equation*}
  For this functional, the Cauchy-Schwarz inequality and \eqref{interpolationHHJ} yield:
  \begin{equation*}
|\mathcal{F}(\tau)|\lesssim h\|\tau-\Pi_{\rm HHJ}\tau\|_{0,N_{f_i}}\lesssim h^2|\tau|_{1,N_{f_i}}.
  \end{equation*}
  Since each parallelogram $N_{f_i}$ is a translate of the parallelogram $N$ of Lemma \ref{lemma:HHJ}, one can find that $(P_1(N))^4_{\rm s}\subset{\rm Ker}(\mathcal{F})$, and a standard application of the Bramble-Hilbert lemma \cite{BrambleHilbert1970} gives
    \begin{equation}\label{functionproperty}
|\mathcal{F}(\tau)|\lesssim h^3|\tau|_{2,N_{f_i}}\quad\text{for all }\tau\in(H^2(N_{f_i}))^4_{\rm s}.
  \end{equation}
 Combing \eqref{itemtotal1}, \eqref{interpolationHHJ} and \eqref{functionproperty}, we conclude that
      \begin{equation*}
    \begin{split}
   |I_i|\lesssim\|\sigma_{\rm HHJ}-\Pi_{\rm HHJ}\sigma\|_{0,\Omega}(h^2|\sigma|_{2,\Omega}+h|\sigma|_{1,\partial\Omega_{f_i}}).
   \end{split}
  \end{equation*}
Lemma \ref{Boundarynorm} implies that
  \begin{equation*}
 |\sigma|_{1,\partial\Omega_{f_i}}\leq|\sigma|_{1,\Omega_{h}}\lesssim h^{\frac{1}{2}}\|\sigma\|_{\frac{3}{2},\Omega}.
  \end{equation*}
  This completes the estimate of $|I_i|$.
  \end{proof}

  We use a similar post-processing mechanism  as in Section \ref{sec:poissonsuperconvergece} and still denote the post-processing operator as $K_h$. Thus given $\tau\in{\rm HHJ}(\cT)$, $K_h\tau\in(W_{\rm CR})^4_{\rm s}$ is similar defined as in Section \ref{sec:poissonsuperconvergece}. Following  the idea of \cite[Theorem 5.1]{Brandts1994}, we can prove the following result.
  \begin{theorem}
  \label{postHHJ}
Let $\tau\in(H^2(\Omega))^4_{\rm s}$. Then for $K_h\Pi_{\rm HHJ}\tau\in (W_{\rm CR})^4_{\rm s}$, we have
  \begin{equation*}
    \|\tau-K_h\Pi_{\rm HHJ}\tau\|_{0,\Omega}\lesssim h^2|\tau|_{2,\Omega}.
  \end{equation*}
  \end{theorem}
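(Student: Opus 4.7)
My plan is to adapt the strategy Brandts used for the Raviart--Thomas case (Theorem \ref{postprocessinghighorder}) to the HHJ element, with Lemma \ref{lemma:HHJ} playing the role of the corresponding integral identity for RT. The core observation will be that $K_h \Pi_{\rm HHJ}$ preserves $(P_1)^4_{\rm s}$ at every edge midpoint used in its construction; granted this, a local Bramble--Hilbert argument on macro-patches, followed by summation, will yield the $h^2$ bound.

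The first step is to establish polynomial preservation at interior edge midpoints. Fix an interior edge $e = K_1 \cap K_2$ with midpoint $P$, and set $N = K_1 \cup K_2$, $c_i = \Pi_{\rm HHJ}\tau|_{K_i}$. For $\tau \in (P_1(N))^4_{\rm s}$, Lemma \ref{lemma:HHJ} gives
\begin{equation*}
|K_1|c_1 + |K_2|c_2 = \int_N \Pi_{\rm HHJ}\tau\,dx = \int_N \tau\,dx = |N|\,\tau(P),
\end{equation*}
where the last equality uses that $P$ is the centroid of the parallelogram $N$ and $\tau$ is linear. Since $|K_1|=|K_2|=|N|/2$, this forces $K_h\Pi_{\rm HHJ}\tau(P) = \tfrac{1}{2}(c_1+c_2) = \tau(P)$. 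For a boundary edge $e \subset \partial K$, the extrapolation formula $K_h q(P) = 2 K_h q(N_c) - K_h q(\tilde P)$ combined with the geometric identity $P = 2 N_c - \tilde P$ (valid since $N_c$ is the midpoint of the segment from $P$ to $\tilde P$) transfers the preservation to boundary midpoints, once the interior case has been invoked at $N_c$ and $\tilde P$.

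With polynomial preservation in hand, I will next run a Bramble--Hilbert argument element by element. For each $K \in \cT$, let $\omega_K$ be the macro-patch consisting of $K$ together with all neighbors entering the definition of $K_h\Pi_{\rm HHJ}\tau$ at the three midpoints of $\partial K$. The map
\begin{equation*}
\tau \longmapsto \bigl(K_h\Pi_{\rm HHJ}\tau - \tau\bigr)\big|_K
\end{equation*}
is a bounded operator from $(H^2(\omega_K))^4_{\rm s}$ into $L^2(K)$ (pointwise evaluation being controlled via the embedding $H^2 \hookrightarrow C^0$ in two dimensions) which vanishes on $(P_1(\omega_K))^4_{\rm s}$. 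Scaling to a reference configuration and invoking the Bramble--Hilbert lemma will produce
\begin{equation*}
\|K_h\Pi_{\rm HHJ}\tau - \tau\|_{0,K} \lesssim h^2 |\tau|_{2,\omega_K}.
\end{equation*}
Squaring, summing over $K \in \cT$, and exploiting the uniformly bounded overlap of the patches $\omega_K$ then delivers the claimed estimate.

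The main obstacle I anticipate is the treatment of the boundary triangles: one must verify that the recursive extrapolation defining $K_h$ at boundary edge midpoints genuinely inherits $(P_1)^4_{\rm s}$-preservation, and that the macro-patches $\omega_K$ near $\partial\Omega$ remain shape-regular and of uniformly bounded size, so that the constants produced by the Bramble--Hilbert scaling argument do not degrade. The uniformity of the triangulation, which guarantees that each interior midpoint sits at the center of a parallelogram and that the boundary reflection $P = 2N_c - \tilde P$ is exact, is the structural assumption that makes this reduction possible.
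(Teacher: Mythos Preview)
Your proposal is correct and follows essentially the same route as the paper: establish that $K_h\Pi_{\rm HHJ}$ reproduces $(P_1)^4_{\rm s}$ on each element patch, combine this with local boundedness of the operator, and conclude by a Bramble--Hilbert/scaling argument and summation. The only cosmetic differences are that the paper phrases boundedness in $L^\infty$ (writing $\|K_h\Pi_{\rm HHJ}\tau\|_{0,\infty,K}\lesssim\|\tau\|_{0,\infty,\tilde K}$ and then invoking polynomial approximation in $L^\infty$) rather than via the $H^2\hookrightarrow C^0$ embedding, and that it derives polynomial preservation by invoking the oddness argument of Lemma~\ref{lemma:HHJ} directly rather than, as you do, by reading the identity $\tfrac12(c_1+c_2)=\tau(P)$ off the integral relation $\int_N\Pi_{\rm HHJ}\tau\,dx=\int_N\tau\,dx$.
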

  \begin{proof}
  First, let $r\in(P_1(\tilde{K}))^4_{\rm s}$, where $\tilde{K}$ is the union of $K$ and the triangles sharing a edge with $K$. Then, using the same arguments as in Lemma \ref{lemma:HHJ} we find that
  \begin{equation}\label{postHHJ:1}
    K_h\Pi_{\rm HHJ}r=r\text{ on }K\quad\text{for all }r\in(P_1(\tilde{K}))^4_{\rm s}.
  \end{equation}
For all $\tau\in(H^2(\Omega))^4_{\rm s}$, since $K_h\Pi_{\rm HHJ}\tau$ is a linear function on $K$, there holds that
  \begin{equation}\label{postHHJ:2}
    \|K_h\Pi_{\rm HHJ}\tau\|_{0,\infty,K}\lesssim\|\Pi_{\rm HHJ}\tau\|_{0,\infty,\tilde{K}}.
  \end{equation}
  Since the interpolation $\Pi_{\rm HHJ}\tau$ is constant on $K$,
  and since the angles between the normals of the edges of $K$ are
  bounded away from $0$ and $\pi(-\pi)$, we have
  \begin{equation}\label{postHHJ:3}
    \|\Pi_{\rm HHJ}\tau\|_{0,\infty,K}\lesssim\sum^3_{j=1}|M_{f_jf_j}(\Pi_{\rm HHJ}\tau)|\lesssim\sum^3_{j=1}\|M_{f_jf_j}(\tau)\|_{0,\infty,\partial K_j}\lesssim\|\tau\|_{0,\infty,K}.
  \end{equation}
  From \eqref{postHHJ:2} and \eqref{postHHJ:3}, we conclude that
  \begin{equation*}
\|K_h\Pi_{\rm HHJ}\tau\|_{0,\infty,K}\lesssim\|\tau\|_{0,\infty,\tilde{K}},
  \end{equation*}
  so that using \eqref{postHHJ:1}, for all $r\in(P_1(\tilde{K}))^4_{\rm s}$
  \begin{equation*}
  \begin{split}
   \|\tau-K_h\Pi_{\rm HHJ}\tau\|_{0,K}&\lesssim h\|\tau-K_h\Pi_{\rm HHJ}\tau\|_{0,\infty,K}\lesssim h\|({\rm I}-K_h\Pi_{\rm HHJ})(\tau-r)\|_{0,\infty,K}\\
  & \lesssim h\|\tau-r\|_{0,\infty,\tilde{K}}.
     \end{split}
  \end{equation*}
 The interpolation theory in Sobolev spaces (see \cite{Ciarlet1978}) shows that
  \begin{equation*}
    \inf\{r\in(P_1(\tilde{K}))^4_{\rm s}:\|\tau-r\|_{0,\infty,\tilde{K}}\}\lesssim h|\tau|_{2,\tilde{K}},
  \end{equation*}
 which yields
  \begin{equation}\label{postHHJ:4}
  \|\tau-K_h\Pi_{\rm HHJ}\tau\|_{0,K}\lesssim h^2|\tau|_{2,\tilde{K}}.
  \end{equation}
 Hence, squaring \eqref{postHHJ:4} and summing over all triangles $K\in\cT$ complete the proof.
  \end{proof}
  A combination of  the superconvergence result and Theorem \ref{postHHJ}, concludes that the post-processing operator $K_h$ also improves the order
 of approximation of $\sigma_{\rm HHJ}$.
\begin{corollary}
Let $\sigma\in (H^2(\Omega))^4_{\rm s}$ and $\sigma_{\rm HHJ}$ be the solutions of \eqref{MixedPlate} and \eqref{MixedplateDiscrete}, respectively. There holds that
  \begin{equation}\label{superconvergenceHHJ}
    \|\sigma-K_h\sigma_{\rm HHJ}\|_{0,\Omega}\lesssim h^{\frac{3}{2}}(\|\sigma\|_{\frac{3}{2},\Omega}+h^{\frac{1}{2}}|\sigma|_{2,\Omega}).
  \end{equation}
  \end{corollary}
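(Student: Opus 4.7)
The plan is to mimic the derivation of Corollary \ref{superconvergenceRT} from Theorems \ref{superRT} and \ref{postprocessinghighorder}, since the statement is structurally identical: a supercloseness result between $\sigma_{\rm HHJ}$ and $\Pi_{\rm HHJ}\sigma$ (Theorem \ref{superHHJ}) combined with a post-processing bound for the interpolant (Theorem \ref{postHHJ}) should yield a superconvergence estimate for $K_h\sigma_{\rm HHJ}$ itself. Concretely, I would begin from the triangle inequality
\begin{equation*}
\|\sigma-K_h\sigma_{\rm HHJ}\|_{0,\Omega}\leq \|\sigma-K_h\Pi_{\rm HHJ}\sigma\|_{0,\Omega}+\|K_h(\Pi_{\rm HHJ}\sigma-\sigma_{\rm HHJ})\|_{0,\Omega}.
\end{equation*}

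The first term is directly controlled by Theorem \ref{postHHJ}, giving a bound of order $h^2|\sigma|_{2,\Omega}$, which is already stronger than what is claimed. For the second term I would argue that $K_h$ is $L^2$-stable on piecewise constants in ${\rm HHJ}(\cT)$. This is where the main technical content lies: since $\tau_h:=\Pi_{\rm HHJ}\sigma-\sigma_{\rm HHJ}\in{\rm HHJ}(\cT)$ is elementwise constant, a standard scaling argument on the reference element together with the explicit local averaging/extrapolation definition of $K_h$ at edge midpoints (and the Crouzeix--Raviart nodal basis expansion already used in the proof of Theorem \ref{The:CRsuperconvergence}) gives
\begin{equation*}
\|K_h\tau_h\|_{0,K}\lesssim \|\tau_h\|_{0,\omega_K},
\end{equation*}
where $\omega_K$ is the union of $K$ with its neighbours involved in the definition of $K_h$; summing over $K\in\cT$ yields $\|K_h\tau_h\|_{0,\Omega}\lesssim \|\tau_h\|_{0,\Omega}$. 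Applying Theorem \ref{superHHJ} then bounds the second term by $h^{3/2}(\|\sigma\|_{3/2,\Omega}+h^{1/2}|\sigma|_{2,\Omega})$.

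Combining the two estimates delivers the claimed bound. The main obstacle, as noted, is the $L^2$-boundedness of $K_h$ at boundary edges: the definition there involves the extrapolation $K_h q(P)=2K_hq(N_c)-K_hq(\tilde P)$, which formally amplifies the pointwise value; one must check that the scaling still gives only an $\cO(1)$ constant in the $L^2$ norm over the boundary triangles, which is the same argument needed (but omitted) in the proof of Theorem \ref{The:CRsuperconvergence}. Assuming this technical lemma, the corollary follows directly by the triangle inequality above.
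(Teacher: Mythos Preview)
Your proposal is correct and follows essentially the same route as the paper. The paper does not spell out a proof of this corollary but simply states that it follows by combining Theorem \ref{superHHJ} and Theorem \ref{postHHJ}; your triangle-inequality splitting together with the $L^2$-stability of $K_h$ on piecewise constants (which the paper invokes as ``the inverse estimate'' in the proofs of Theorems \ref{The:CRsuperconvergence} and \ref{The:Platesuperconvergence}) is exactly the intended way to effect that combination.
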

  \subsection{The superconvergence result of the Morley element}
  \begin{theorem}\label{The:Platesuperconvergence}
  Let $u\in H^4(\Omega)$ and $u_{\rm M}$ be the solutions \eqref{Plateequation} and \eqref{PlateDiscrete}, respectively. Then we have
    \begin{equation}\label{Platesuperconvergence}
    \|\nabla^2u-K_h\nabla^2_{\rm NC}u_{\rm M}\|_{0,\Omega}\lesssim h^{\frac{3}{2}}(\|u\|_{\frac{7}{2},\Omega}+h^{\frac{1}{2}}|u|_{4,\Omega}+h^{\frac{1}{2}}\|f\|_{0,\Omega}).
  \end{equation}
  \end{theorem}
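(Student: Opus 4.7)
The plan is to mirror the strategy used in the proof of Theorem \ref{The:CRsuperconvergence}: exploit the equivalence \eqref{equvilencePlate} between the modified Morley method and the Hellan--Herrmann--Johnson method, apply the post-processed superconvergence \eqref{superconvergenceHHJ} already established for the HHJ stress, and control the small discrepancy between $u_{\rm M}$ and $\bar{u}_{\rm M}$ with \eqref{DifferecePlate}. Setting $\sigma=\nabla^2u$, I would first observe that $\|\sigma\|_{\frac{3}{2},\Omega}\lesssim\|u\|_{\frac{7}{2},\Omega}$ and $|\sigma|_{2,\Omega}\lesssim|u|_{4,\Omega}$, so \eqref{superconvergenceHHJ} immediately delivers
\begin{equation*}
\|\sigma-K_h\sigma_{\rm HHJ}\|_{0,\Omega}\lesssim h^{\frac{3}{2}}(\|u\|_{\frac{7}{2},\Omega}+h^{\frac{1}{2}}|u|_{4,\Omega}).
\end{equation*}

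In contrast to the Crouzeix--Raviart setting, the equivalence \eqref{equvilencePlate} yields $\sigma_{\rm HHJ}=\nabla^2_{\rm NC}\bar{u}_{\rm M}$ \emph{exactly}, with no counterpart of the shift $\tfrac{f_K}{2}(x-{\rm Mid}(K))$ that appeared in \eqref{equivalencePoisson}. Consequently the term analogous to \eqref{superCR1} is absent, and the triangle inequality reduces the theorem to the single additional estimate
\begin{equation*}
\|K_h(\nabla^2_{\rm NC}\bar{u}_{\rm M}-\nabla^2_{\rm NC}u_{\rm M})\|_{0,\Omega}\lesssim h^2\|f\|_{0,\Omega}.
\end{equation*}

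This estimate follows as in \eqref{superCR2}: since $\nabla^2_{\rm NC}(\bar{u}_{\rm M}-u_{\rm M})$ is piecewise constant on $\cT$, the definition of $K_h$ at interior midpoints is a simple average and at boundary midpoints a bounded extrapolation through the mate parallelogram, so local scaling on each $K\in\cT$ shows that $K_h$ maps piecewise constants boundedly (in $L^2$) into $(W_{\rm CR})^4_{\rm s}$. Combined with \eqref{DifferecePlate}, this yields the desired bound. The triangle inequality then produces \eqref{Platesuperconvergence}.

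The only mildly delicate point is the boundedness of $K_h$ on piecewise constants near $\partial\Omega$, since the definition there involves an extrapolation; however, the shape regularity of $\cT$ keeps the extrapolation coefficients uniformly bounded, so no new obstacle appears. The genuinely hard work, namely the supercloseness of $\sigma_{\rm HHJ}$ to $\Pi_{\rm HHJ}\sigma$ and the gain of a half order through $K_h$, has already been discharged in Theorem \ref{superHHJ} and Theorem \ref{postHHJ}; the role of the present proof is simply to translate these HHJ statements back to the Morley element through \eqref{equvilencePlate} and absorb the small Morley--to--modified-Morley defect via \eqref{DifferecePlate}.
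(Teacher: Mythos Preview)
Your proposal is correct and follows essentially the same route as the paper: triangle inequality, the exact identification $\sigma_{\rm HHJ}=\nabla^2_{\rm NC}\bar{u}_{\rm M}$ from \eqref{equvilencePlate}, boundedness of $K_h$ on piecewise constants (the paper phrases this as an ``inverse estimate'') together with \eqref{DifferecePlate}, and finally \eqref{superconvergenceHHJ}. Your observation that no analogue of the shift term \eqref{superCR1} is needed here is exactly the simplification the paper exploits.
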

  \begin{proof}
  The triangle inequality plus the equivalence \eqref{equvilencePlate} and the inverse estimate give that
 \begin{equation*}
 \begin{split}
   \|\nabla^2u-K_h\nabla^2_{\rm NC}u_{\rm M}\|_{0,\Omega}&\lesssim\|\nabla^2u-K_h\nabla^2_{\rm NC}\bar{u}_{\rm M}\|_{0,\Omega}+\|K_h(\nabla^2_{\rm NC}u_{\rm M}-\nabla^2_{\rm NC}\bar{u}_{\rm M})\|_{0,\Omega}\\
   &\lesssim \|\sigma-K_h\sigma_{\rm HHJ}\|_{0,\Omega}+\|\nabla^2_{\rm NC}u_{\rm M}-\nabla^2_{\rm NC}\bar{u}_{\rm M}\|_{0,\Omega}.
   \end{split}
 \end{equation*}
Thus \eqref{superconvergenceHHJ} and \eqref{DifferecePlate} complete the proof.
  \end{proof}
We can only prove a half order superconvergence in Theorem \ref{The:Platesuperconvergence}. Under the same assumptions as in \cite[Theorem 4.4]{MaoShi2009}, we give the following one order superconvergence result.
  \begin{theorem}
  Under the assumption of Theorem \ref{The:Platesuperconvergence}, and further suppose that $\nabla^3u|_{\partial\Omega}=0$,  then we have
 \begin{equation*}
   \|\nabla^2u-K_h\nabla^2_{\rm NC}u_{\rm M}\|_{0,\Omega}\lesssim h^2(|u|_{4,\Omega}+\|f\|_{0,\Omega}).
 \end{equation*}
  \end{theorem}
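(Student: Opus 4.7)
The plan is to revisit the proof of Theorem \ref{The:Platesuperconvergence} and pinpoint exactly where the $h^{1/2}$ loss occurred, then recover it using the new boundary hypothesis $\nabla^3 u|_{\partial\Omega}=0$. Starting from the triangle inequality
\begin{equation*}
\|\nabla^2u-K_h\nabla^2_{\rm NC}u_{\rm M}\|_{0,\Omega}\lesssim \|\sigma-K_h\sigma_{\rm HHJ}\|_{0,\Omega}+\|\nabla^2_{\rm NC}(u_{\rm M}-\bar{u}_{\rm M})\|_{0,\Omega},
\end{equation*}
the second term is already $O(h^2\|f\|_{0,\Omega})$ by \eqref{DifferecePlate}, so it suffices to upgrade the first term from $h^{3/2}$ to $h^2$.

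To do this, I would reinspect the proof of Theorem \ref{superHHJ}. The $h^2|\sigma|_{2,\Omega}$ contribution from the interior parallelograms (which came from Lemma \ref{lemma:HHJ} and Bramble--Hilbert) is already of the right order; the only obstruction to an $h^2$ bound is the boundary-triangle term, which was controlled by $h|\sigma|_{1,\partial\Omega_{f_i}}$ and then by $h\cdot h^{1/2}\|\sigma\|_{3/2,\Omega}$ via Lemma \ref{Boundarynorm}. Since $\sigma=\nabla^2 u$, the hypothesis $\nabla^3 u|_{\partial\Omega}=0$ is exactly $\nabla\sigma=0$ on $\partial\Omega$. Thus on the strip $\Omega_h$, a Poincar\'e/Friedrichs-type inequality for functions vanishing on $\partial\Omega$ applied componentwise to $\nabla\sigma$ yields
\begin{equation*}
|\sigma|_{1,\partial\Omega_{f_i}}\leq |\sigma|_{1,\Omega_h}\lesssim h\,|\sigma|_{2,\Omega_h}\lesssim h\,|u|_{4,\Omega}.
\end{equation*}
Inserting this into the estimate for $|I_i|$ in Theorem \ref{superHHJ} promotes the boundary contribution from $h\cdot h^{1/2}\|\sigma\|_{3/2,\Omega}$ to $h\cdot h\,|u|_{4,\Omega}=h^2|u|_{4,\Omega}$, and therefore
\begin{equation*}
\|\sigma_{\rm HHJ}-\Pi_{\rm HHJ}\sigma\|_{0,\Omega}\lesssim h^2|u|_{4,\Omega}.
\end{equation*}

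Next, combining this with Theorem \ref{postHHJ}, I would write
\begin{equation*}
\|\sigma-K_h\sigma_{\rm HHJ}\|_{0,\Omega}\leq \|\sigma-K_h\Pi_{\rm HHJ}\sigma\|_{0,\Omega}+\|K_h(\Pi_{\rm HHJ}\sigma-\sigma_{\rm HHJ})\|_{0,\Omega},
\end{equation*}
where the first term is $\lesssim h^2|\sigma|_{2,\Omega}\lesssim h^2|u|_{4,\Omega}$ by Theorem \ref{postHHJ}. For the second term I would invoke the $L^2$-stability of $K_h$ on piecewise constants (which is implicit in the scaling/$L^\infty$ arguments used in Theorems \ref{The:CRsuperconvergence} and \ref{postHHJ}), so that
\begin{equation*}
\|K_h(\Pi_{\rm HHJ}\sigma-\sigma_{\rm HHJ})\|_{0,\Omega}\lesssim \|\Pi_{\rm HHJ}\sigma-\sigma_{\rm HHJ}\|_{0,\Omega}\lesssim h^2|u|_{4,\Omega},
\end{equation*}
which closes the argument together with \eqref{DifferecePlate}.

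The main obstacle I expect is the careful justification of the strip estimate $|\sigma|_{1,\Omega_h}\lesssim h\,|\sigma|_{2,\Omega_h}$ under the boundary condition $\nabla\sigma|_{\partial\Omega}=0$. On a general polygonal $\Omega$ this requires a Poincar\'e inequality along normal lines inside $\Omega_h$ (essentially a one-dimensional fundamental-theorem-of-calculus argument followed by Fubini), and one has to verify that the constant does not degenerate near corners. A secondary technical point is the $L^2$-stability of $K_h$; although it is natural and follows from the pointwise definition plus scaling, it is not explicitly recorded earlier, so it would need to be stated (and briefly verified in the same style as the computation in \eqref{postHHJ:3}) before being used.
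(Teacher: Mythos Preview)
Your proposal is correct and follows essentially the same approach as the paper: both revisit the boundary-triangle term in Theorem~\ref{superHHJ} and use the hypothesis $\nabla\sigma|_{\partial\Omega}=0$ together with a Poincar\'e/Friedrichs inequality to gain the missing power of $h$, yielding $\|\sigma_{\rm HHJ}-\Pi_{\rm HHJ}\sigma\|_{0,\Omega}\lesssim h^2|\sigma|_{2,\Omega}$. The only technical difference is that the paper applies the Poincar\'e inequality element-by-element on each boundary triangle $T_{f_i}$ (each has an edge on $\partial\Omega$ where $\nabla\sigma$ vanishes, so $|\sigma|_{1,T_{f_i}}\lesssim h\,|\sigma|_{2,T_{f_i}}$ is immediate), which sidesteps the strip/corner issue you flagged as a potential obstacle.
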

   \begin{proof}
 We reconsider the estimate of the second term on the right hand of \eqref{itemtotal} in Theorem \ref{superHHJ}. Since $\nabla^3u|_{\partial\Omega}=0$, i.e., $\nabla\sigma|_{\partial\Omega}=0$, the Poincar$\rm\acute{e}$ inequality and scaling arguments show that
\begin{equation*}
  \big|\int_{T_{f_i}}(\sigma-\Pi_{\rm HHJ}\sigma)dx\big|\lesssim h^2|\sigma|_{1,T_{f_i}}\lesssim h^3|\sigma|_{2,T_{f_i}}.
\end{equation*}
 Hence, this results in one order superconvergence as follows:
  \begin{equation*}
    \|\sigma_{\rm HHJ}-\Pi_{\rm HHJ}\sigma\|_{0,\Omega}\lesssim h^2|\sigma|_{2,\Omega}.
  \end{equation*}
  Thus this completes the proof.
  \end{proof}
  \section{Numerical Tests}
  In this section, we present some numerical tests to confirm some of the theoretical analyses in the previous sections.
  \subsection{The Poisson problem}
Suppose domain $\Omega$ is a square, see \reffig{squaredomain}. Consider the following Poisson problem
\begin{equation*}
  -\Delta u=f\quad\text{in }\Omega
\end{equation*}
with $u\in H^1_0(\Omega)$. The exact solution is
\begin{equation*}
  u(x_1,x_2)=\sin\pi x_1\sin\pi x_2.
\end{equation*}
\begin{figure}[!ht]
  \centering
  % Requires \usepackage{graphicx}
  \includegraphics[width=6cm]{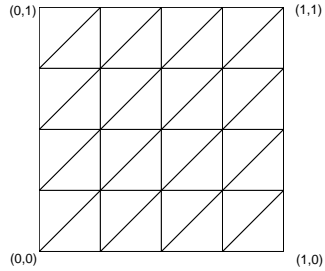}\\
  \caption{Square domain with uniform triangulations}\label{squaredomain}
\end{figure}

We compare the error $\|\nabla u-\nabla_{\rm NC}u_{\rm CR}\|_{0,\Omega}$ and the post-processing error $\|\nabla u-K_h\nabla_{\rm NC}u_{\rm CR}\|_{0,\Omega}$. The corresponding computational results are showed in \reffig{figpoisson} and listed in Table \ref{poissonsup}. It can be seen that the $O(h^{\frac{3}{2}})$ convergence rate $\|\nabla u-K_h\nabla_{\rm NC}u_{\rm CR}\|_{0,\Omega}$ in Theorem \ref{The:CRsuperconvergence} is verified by the numerical results. However, the numerical results indicate that the convergence rate is $O(h^2)$. So that the order proved in Theorem \ref{The:CRsuperconvergence} may be suboptimal.
 \begin{figure}[!ht]
  \centering
  % Requires \usepackage{graphicx}
  \includegraphics[width=10cm,height=5.5cm]{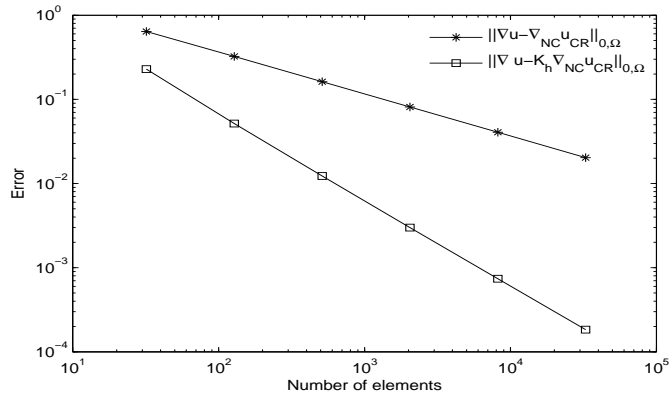}\\
  \caption{Convergence of the Crouzeix-Raviart element}\label{figpoisson}
\end{figure}
\begin{table}[!ht]
  \caption{Convergence of the Crouzeix-Raviart element}\label{poissonsup}
  \centering
  % Table generated by Excel2LaTeX from sheet 'Sheet1'
\begin{tabular}{|c|c|c|c|c|}
\hline
Number of elements  &$\|\nabla u-\nabla_{\rm NC}u_{\rm CR}\|_{0,\Omega}$  &    $Rate$       &    $\|\nabla u-K_h\nabla_{\rm NC}u_{\rm CR}\|_{0,\Omega}$ &  $Rate$              \\
\hline
       $8\times4$    & 6.4104E-01 &            & 2.2880E-01 &            \\
\hline
   $16\times 8$        & 3.2395E-01 &    0.9847  & 5.1669E-02 &    2.1467  \\
\hline
     $32\times 16$       & 1.6241E-01 &    0.9961  & 1.2286E-02 &    2.0723  \\
\hline
   $64\times 32$        & 8.1259E-02 &    0.9990  & 2.9936E-03 &    2.0370  \\
\hline
    $128\times64$       & 4.0636E-02 &    0.9998  & 7.3852E-04 &    2.0192  \\
\hline
  $256\times128$         & 2.0319E-02 &    0.9999  & 1.8337E-04 &    2.0098  \\
\hline
\end{tabular}
\end{table}

 \subsection{The plate bending problem}
Suppose domain $\Omega$ is a parallelogram, see \reffig{paralledomain}. Consider the following plate bending problem
\begin{equation*}
  \Delta^2 u=f\quad\text{in }\Omega
\end{equation*}
with $u\in H^2_0(\Omega)$. The exact solution is
\begin{equation*}
  u(x_1,x_2)=(x_1-\sqrt{3}x_2)^2(x_1-\sqrt{3}x_2-2)^2x_2^2(\frac{\sqrt{3}}{2}-x_2)^2.
\end{equation*}
\begin{figure}[!ht]
  \centering
  % Requires \usepackage{graphicx}
  \includegraphics[width=8cm]{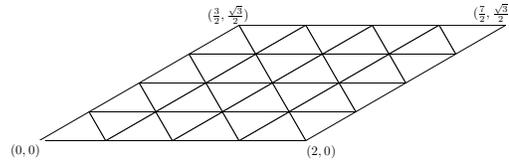}\\
  \caption{Parallelogram domain with uniform triangulations}\label{paralledomain}
\end{figure}

We compare the error $\|\nabla^2 u-\nabla_{\rm NC}^2u_{\rm M}\|_{0,\Omega}$ and the post-processing error $\|\nabla^2 u-K_h\nabla^2_{\rm NC}u_{\rm M}\|_{0,\Omega}$. The corresponding computational results are showed in \reffig{figplate} and listed in Table \ref{platesuper}. It can be seen that the $O(h^{\frac{3}{2}})$ convergence rate $\|\nabla^2 u-K_h\nabla^2_{\rm NC}u_{\rm M}\|_{0,\Omega}$ in Theorem \ref{The:Platesuperconvergence} is verified by the numerical results. However, the numerical results still indicate that the convergence rate is $O(h^2)$. So that the order proved in Theorem \ref{The:Platesuperconvergence} may be suboptimal.
 \begin{figure}[!ht]
  \centering
  % Requires \usepackage{graphicx}
  \includegraphics[width=10cm,height=5.5cm]{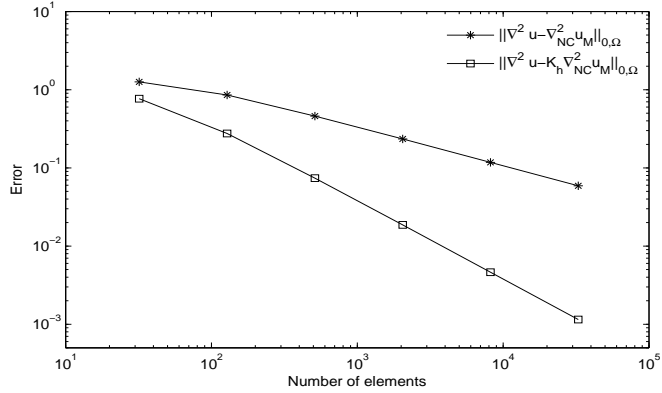}\\
  \caption{Convergence of the Morley element}\label{figplate}
\end{figure}
\begin{table}[!ht]
  \caption{Convergence of the Morley element}\label{platesuper}
  \centering
  % Table generated by Excel2LaTeX from sheet 'Sheet1'
\begin{tabular}{|c|c|c|c|c|}
\hline
Number of elements  &$\|\nabla^2u-\nabla_{\rm NC}^2u_{\rm M}\|_{0,\Omega}$   &   $Rate$&    $\|\nabla^2u-K_h\nabla_{\rm NC}^2u_{\rm M}\|_{0,\Omega}$          &    $Rate$  \\
% Table generated by Excel2LaTeX from sheet 'Sheet1'
\hline
         $8\times4$        & 1.2599E+00 &            &    7.6681E-01  &            \\
\hline
   $16\times 8$         & 8.5516E-01 &    0.5591  &    2.7553E-01  &    1.4766  \\
\hline
    $32\times 16$           & 4.6008E-01 &    0.8943  &    7.3946E-02  &    1.8977  \\
\hline
    $64\times 32$          & 2.3428E-01 &    0.9736  &    1.8627E-02  &    1.9891  \\
\hline
         $128\times64$         & 1.1768E-01 &    0.9934  &   4.6311E-03  &    2.0080  \\
\hline
   $256\times128$           & 5.8909E-02 &    0.9983  &    1.1506E-03  &    2.0090  \\
\hline
\end{tabular}
\end{table}

\end{document}